\theoremstyle{plain}
\newtheorem*{theorem*}{Theorem}
\newtheorem*{remark*}{Remark}
\newtheorem*{example*}{Example}
\newtheorem{lemma}{Lemma}
\newtheorem{proposition}[lemma]{Proposition}
\newtheorem{theorem}[lemma]{Theorem}
\newtheorem{conjecture}[lemma]{Conjecture}
\newtheorem{question}[lemma]{Question}
\newtheorem*{conjecture*}{Conjecture}
\theoremstyle{definition}
\newtheorem{definition}[lemma]{Definition}
\newtheorem{example}[lemma]{Example}
\theoremstyle{remark}
\newtheorem{remark}[lemma]{Remark}
\newtheorem{notation}[lemma]{Notation}
\newcommand{\bR}{{\mathbb R}}
\newcommand{\abs}[1]{\left|{#1}\right|}
\def\quotient#1#2{%
    \raise1ex\hbox{$#1$}\Big/\lower1ex\hbox{$#2$}%
}
\begin{document}

\date{October 11, 2021}
\setcounter{tocdepth}{1}
\title[Board games,
random boards and long boards]{Board games, random boards and long boards}
 \author{Ary Shaviv}
\address{Department of Mathematics, Princeton University, Princeton, New Jersey 08544}
\email{ashaviv@math.princeton.edu}
\subjclass[2010]{Primary 05C57, Secondary 05C80}

\maketitle

\begin{abstract}
For any odd integer $n\geq3$ a board (of size $n$) is a square array of $n\times n$ positions with a simple rule of how to move between positions. The goal of the game we introduce is to find a path from the upper left corner of a board to the center of the square. If there exists such a path we say that the board is solvable, and we say that the length of this board is the length of a shortest such path. There are $8^{n^2}$ different boards. We discuss various properties of these boards and present some questions and conjectures. In particular, we show that for $n\gg1$ roughly $\frac{1}{3}$ of the boards are solvable, and that the expected length of a random solvable board tends to $\frac{209}{96}$, i.e., very big solvable boards tend to have extremely short solutions.   
\end{abstract}

\tableofcontents

\section{Introduction} In this paper we introduce a very simple board game and study its properties. The exact definitions are given in Section \ref{section_definitions}, however the intuitive idea is quite natural: for an odd integer $n\geq3$ we are given an array of $n\times n$ arrows, where each arrow is an element of the standard 8 directions of the wind compass  (\emph{a board of size $n$}). 

\begin{center}\includegraphics[width=8cm]{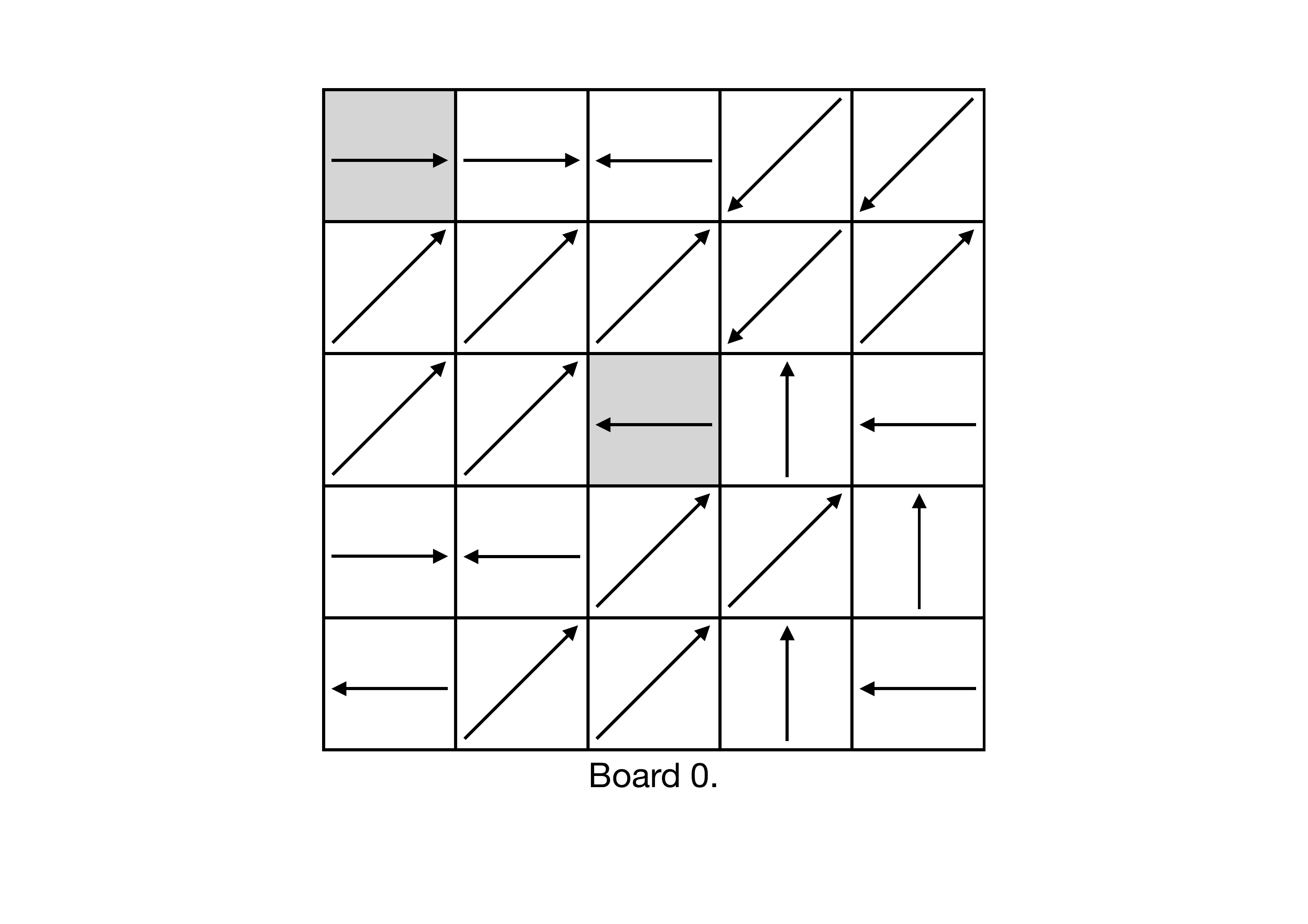}\end{center}

We start at the arrow in the upper left corner, and are allowed to move from the square we are in to any square that the arrow we stand on is pointing to.  Our goal is to get to the middle square of this array (the $(\frac{n+1}{2},\frac{n+1}{2})$ entry) and if we can do it (if the board is \emph{solvable}) we want to do it in the minimal number of steps (this minimal number is called the \emph{length} of the board). For instance, Board 0 above is obviously solvable and its length is 2 -- the quickest way of arriving from the upper left corner to the center is to first jump to the upper right corner and then jump to the center square. This required 2 jumps and so Board 0 is solvable and its length is 2. 

In this paper we ask many questions about these boards and games, give some (somewhat surprising) answers, and state a few conjectures. All of our proofs are elementary, and mainly rely on basic probability theory. Here is a brief survey of the main issues that we study: 

There are obviously $8^{n^2}$ different boards of size $n$. Say we choose at random a board of size $n$ by the uniform discrete measure on the set of all boards of size $n$.
 Clearly the probability of choosing a solvable board cannot be more than $\frac{3}{8}$, as a board cannot be solvable unless its upper left arrow is either $\rightarrow,\searrow$ or $\downarrow$. We show that as $n$ tends to infinity the probability of choosing a solvable board goes to this upper bound $\frac{3}{8}$ (Proposition \ref{Prop_prob_solvable}).

Now assume we choose at random a \emph{solvable} board of size $n$ by the uniform discrete measure on the set of \emph{solvable} boards of size $n$. What is the expected value of the length of such a random board? We show that as $n$ tends to infinity this expected value goes to $\frac{209}{96}$, namely, the expected value
of the length of a big random solvable board tends to be slightly more than
2 (Theorem \ref{theorem_on_expected_length}).

For a fixed $n$ we may ask what is "the worst case scenario", i.e., what is the maximal length of a solvable board of size $n$ (we denote this number by $ML(n)$). Obviously $ML(n)\leq n^2$ however we conjecture that as $n$ tends to infinity $ML(n)=O(n)$ or at least $ML(n)=o(n^2)$ (Conjectures \ref{main_conjecture},\ref{weak_conjecture}). We in particular show that if we place our boards on tori then this conjecture holds: in fact, $\overline{ML}(n)$ -- the maximal length of a board on a torus of size $n$ -- satisfies $\overline{ML}(n)=\Theta(n)$ (Proposition \ref{proposition_on_ML_on_torus}). 

We also explain how to associate a directed graph to any given board, translate the questions of solvability and length to this graph, and discuss the classifications of these graphs, their isomorphism classes and so on.

\subsection*{Structure of this paper}In Section \ref{section_definitions} we define the notions of board, game and length, and give some simple examples. 

Section \ref{section_random_boards} is devoted to defining and discussing random boards and random solvable boards and to proving some results regarding their asymptotic behaviour. 

In Section \ref{section_ML} we formulate rigourously the question of \emph{how complicated can a board of a given size be?} and formulate our main conjecture about the answer. 

In Section \ref{section_graphs} we introduce the graph of a board, discuss the properties of such graphs and ask some questions about their characterization.

Finally, In Section \ref{section_remarks} we explain how to determine the solvability of a given board (and its length if solvable) by a well known search algorithm,  discuss some possible generalizations of our boards, and realize our boards as matrices over $\mathbb{F}_9$.

\subsection*{Acknowledgments}The author is grateful to Noga Alon and Michele Fornea for valuable discussions (see more specific thanks in the text) and to Charles Fefferman for his outstanding continuous support. The author was supported by AFOSR Grant FA9550-18-1-069.

\section{Defining boards and games}\label{section_definitions}
\begin{notation}We denote the set of 8 principle winds of the
compass rose by $\mathcal{D}$: $$\mathcal{D}:=\{\text{N,NE,E,SE,S,SW,W,NW}\},$$or
$$\mathcal{D}=\{\uparrow,\nearrow,\rightarrow,\searrow,\downarrow,\swarrow,\leftarrow,\nwarrow\},$$respectively. We sometimes refer to elements of $\mathcal{D}$ as \emph{directions}.
\end{notation}

\begin{definition}Let $n\geq3$ be an odd integer. An $n\times n$ matrix whose entries are directions is called \emph{a board of size $n$}. The set $\text{Mat}_{n\times n}(\mathcal{D})$ is the set of all boards of size $n$. 

For a fixed board $A$ and $1\leq i,j,i',j'\leq n$, $(i,j)\neq(i',j')$ we say that $a_{ij}$ (the $(i,j)$ entry of $A$) \emph{is directing to} $(i',j')$ if either $$((a_{ij}=\uparrow)\wedge(i>i')\wedge(j=j'))$$or $$((a_{ij}=\nearrow)\wedge(i-i'=j'-j>0))$$ or $$((a_{ij}=\rightarrow)\wedge(i=i')\wedge(j<j'))$$and so on (a total of 8 possible conditions).

\emph{A game} on the board $A$ is the following: the player always starts at the upper left corner of the matrix $A$ (the $(1,1)$ position), and at each turn moves to another entry of $A$ by the following rule:

\begin{center}$\star$ \emph{It is allowed to move from position} $(i,j)$ \emph{to position} $(i',j')$ \emph{iff} $a_{ij}$ \emph{is directing to} $(i',j')$.\end{center}
The game is over the first time the player reaches a position that is either (1) or (2):
\begin{enumerate}
\item the $({\frac{n+1}{2},\frac{n+1}{2}})$
position (the center of the matrix);
\item a position $(i,j)$ such that $a_{ij}$
is not directing to any other position in the matrix.
\end{enumerate}
If a game is over after $k$ turns because the player reached the $({\frac{n+1}{2},\frac{n+1}{2}})$ position, then
we say that the player \emph{won the game after $k$ turns}. If the game is over after $k$ turns because the player reached a position $(i,j)$ such that $a_{ij}$ is not directing to any other position in the matrix (e.g., the $(1,n)$ position with $a_{1n}=\uparrow$), then we say that the player \emph{lost the game after $k$ turns}. We denote by $(i_l,j_l)$ the position in which the player
stayed after the
$l^{\text{th}}$ turn. Then, a game on $A$ that ended after $k$ turns is equivalent to the sequence $$(1,1)=(i_0,j_{0}),(i_1,j_{1}),\dots,(i_k,j_{k}),$$where $(i_k,j_{k})=({\frac{n+1}{2},\frac{n+1}{2}})$ if and only if the player won the game. If there exists a game on the board $A$ that ends with the player winning, then we say that \emph{the board $A$ is solvable}. Otherwise, we say that $A$ in not solvable. For a solvable board $A$, we say that the \emph{length} of $A$ is the minimal number $k$, such that there exists a game on $A$ in which the player wins after $k$ turns.  
\end{definition}

\begin{example}\label{first_example}Board 1 below is solvable and it has length 5, whereas Board 2 is not solvable.  The gray background has no mathematical meaning and is presented only in order to make it easier to spot the starting position and the target position. The colored arrows correspond to the sequence $(1,1)=(i_0,j_{0}),(i_1,j_{1}),\dots,(i_k,j_{k})$ above, i.e., they represent a game.
\begin{center}\includegraphics[width=12cm]{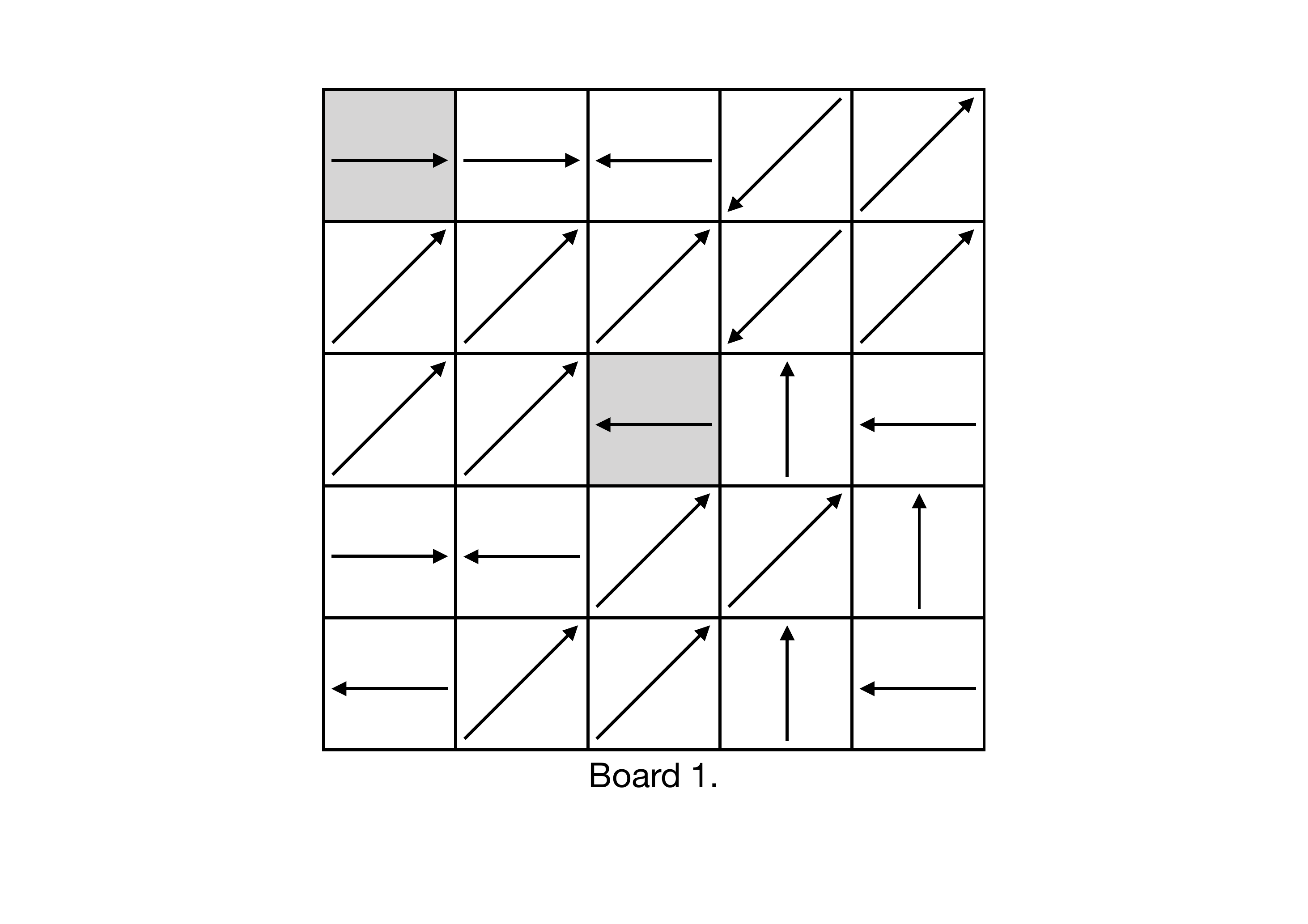}\end{center}
\begin{center}\includegraphics[width=12cm]{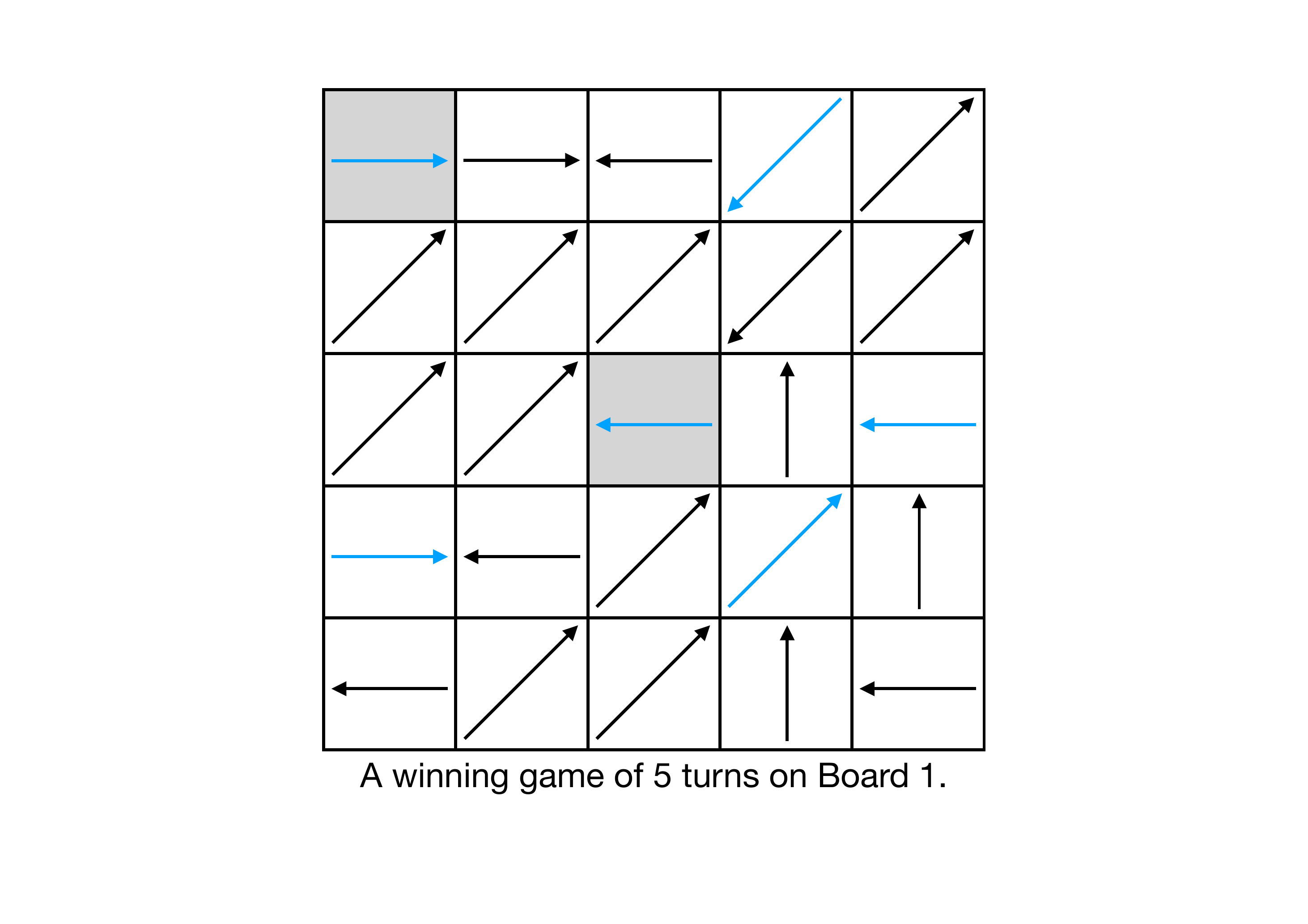}\end{center}
\begin{center}\includegraphics[width=12cm]{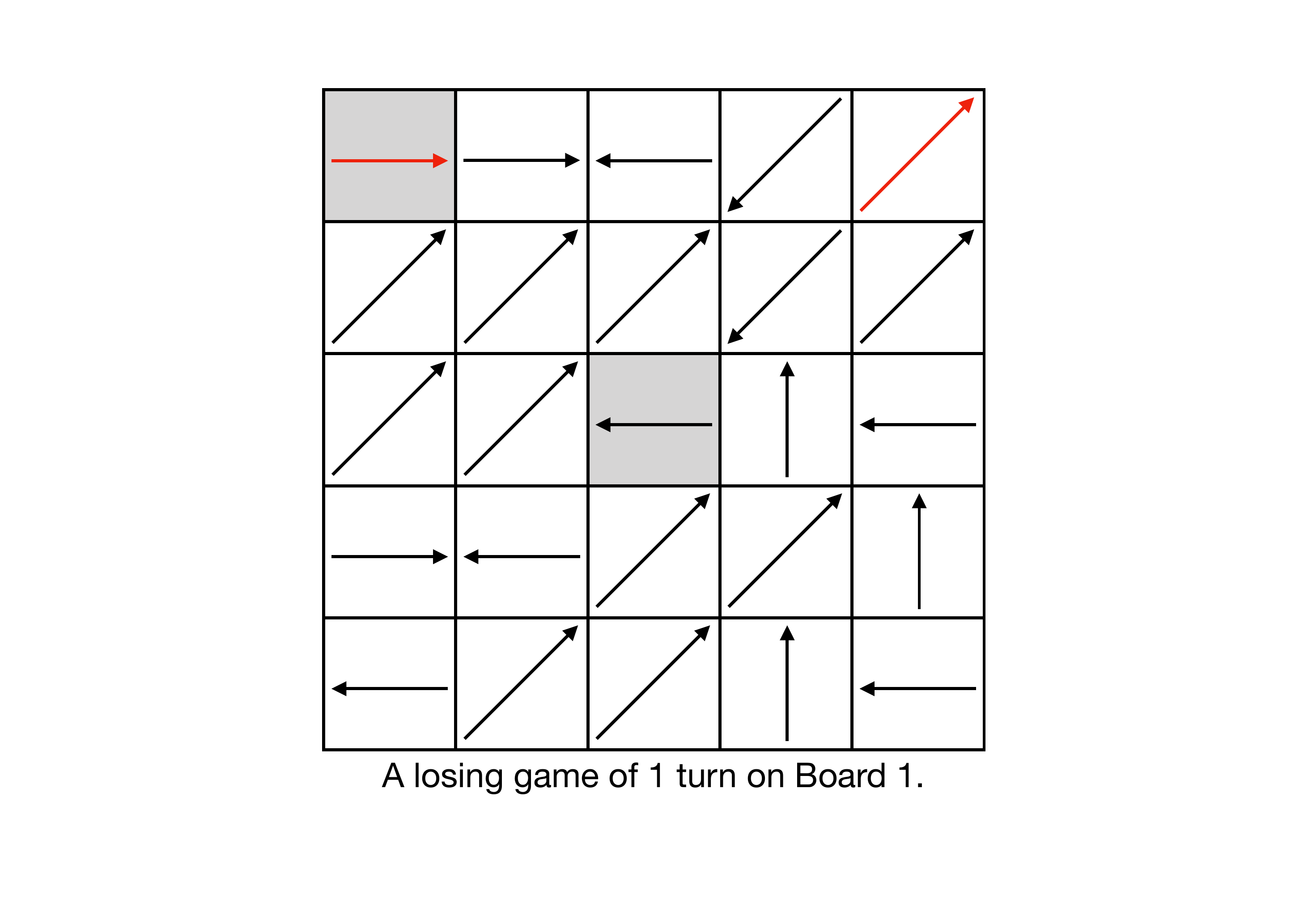}\end{center}
\begin{center}\includegraphics[width=12cm]{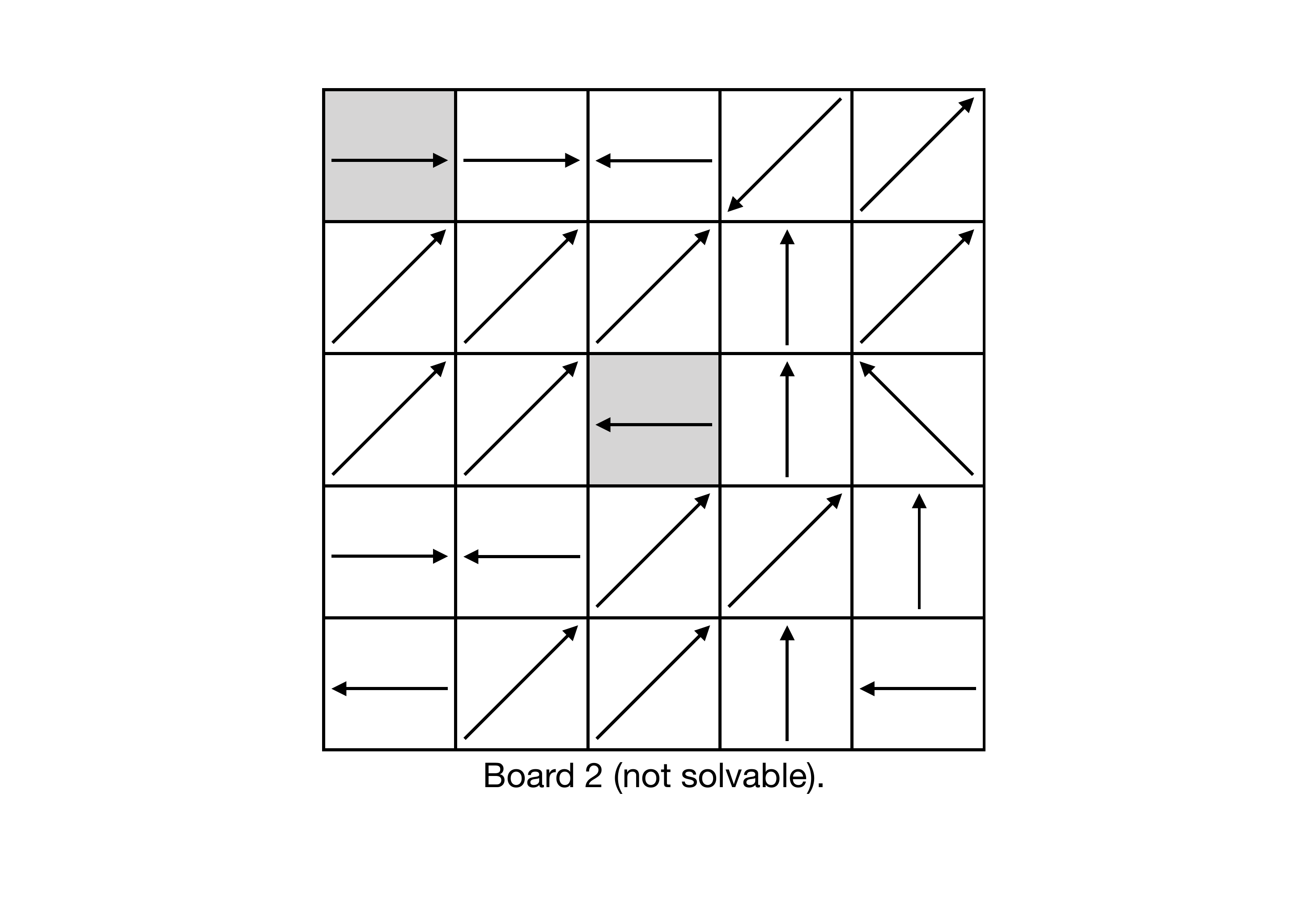}\end{center}
Note that a winning game on a board may be of more steps than the length of the board. For instance, here is a winning game of 6 turns on Board 1 (that has length 5):
\begin{center}\includegraphics[width=12cm]{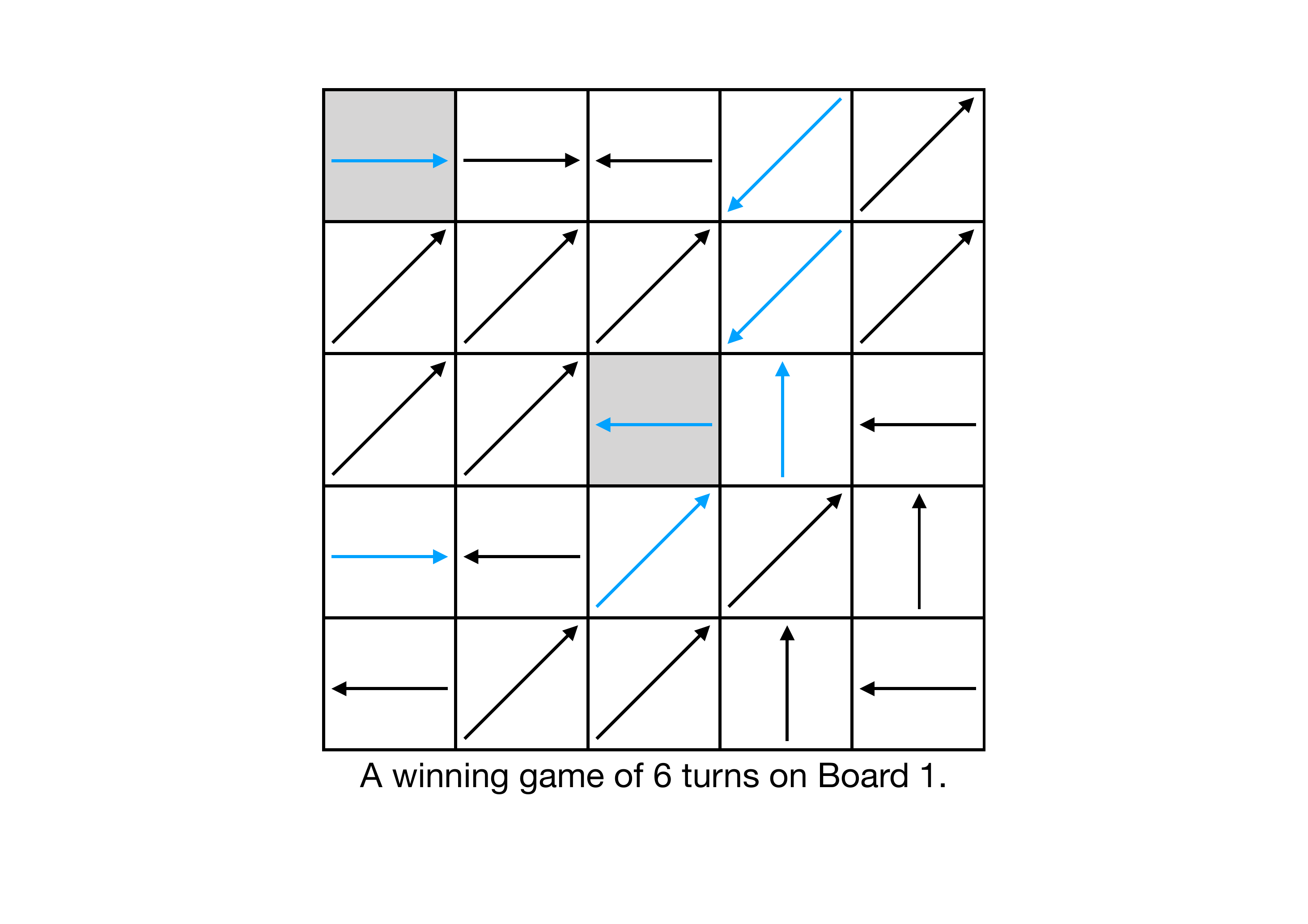}\end{center}
Board 3 below has length 9, but there exists a game of length 24 on it as well. In fact, for any $9\leq k\leq24$ we can easily find a game of length $k$ on this board.
\begin{center}\includegraphics[width=12cm]{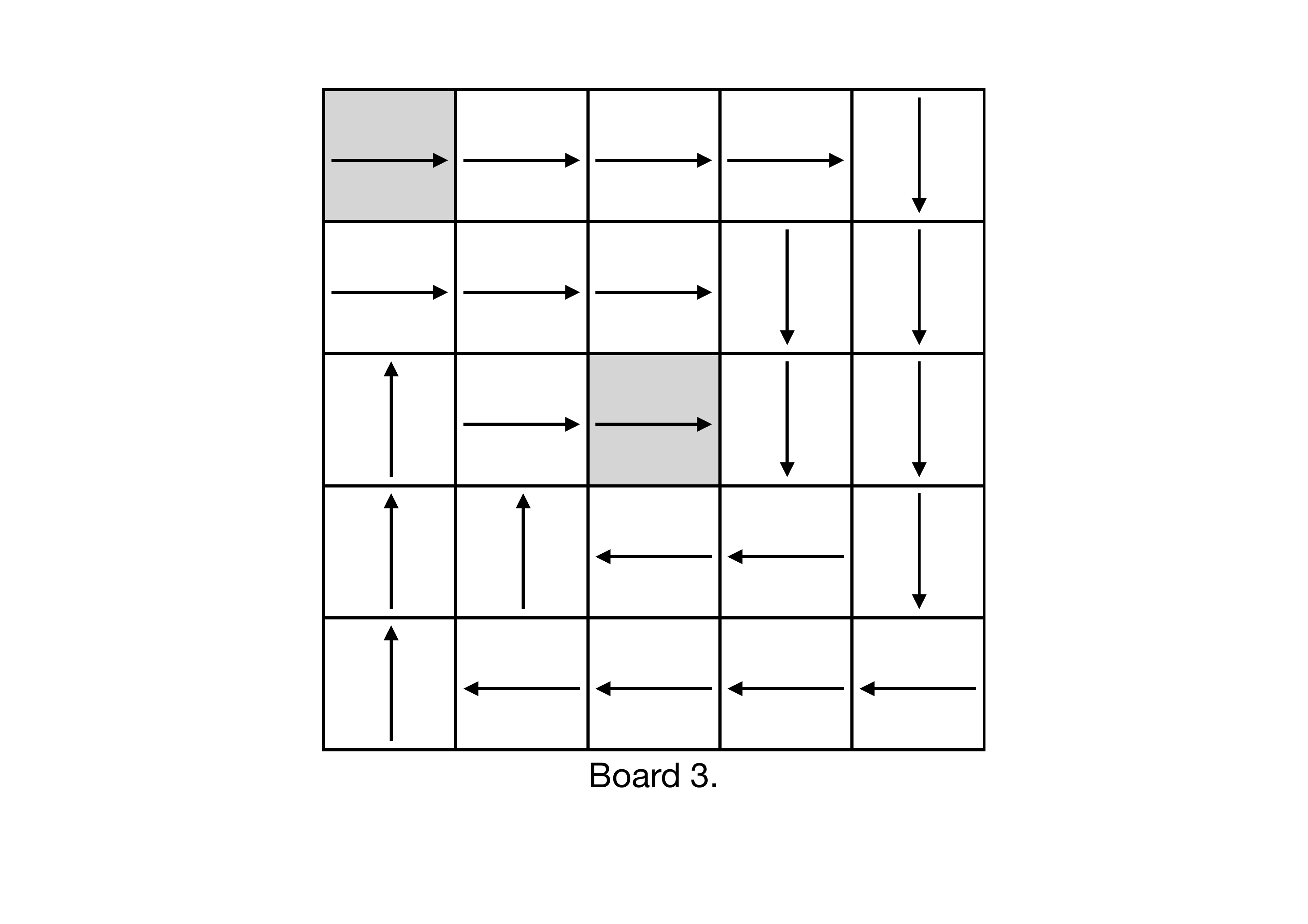}\end{center}
\begin{center}\includegraphics[width=12cm]{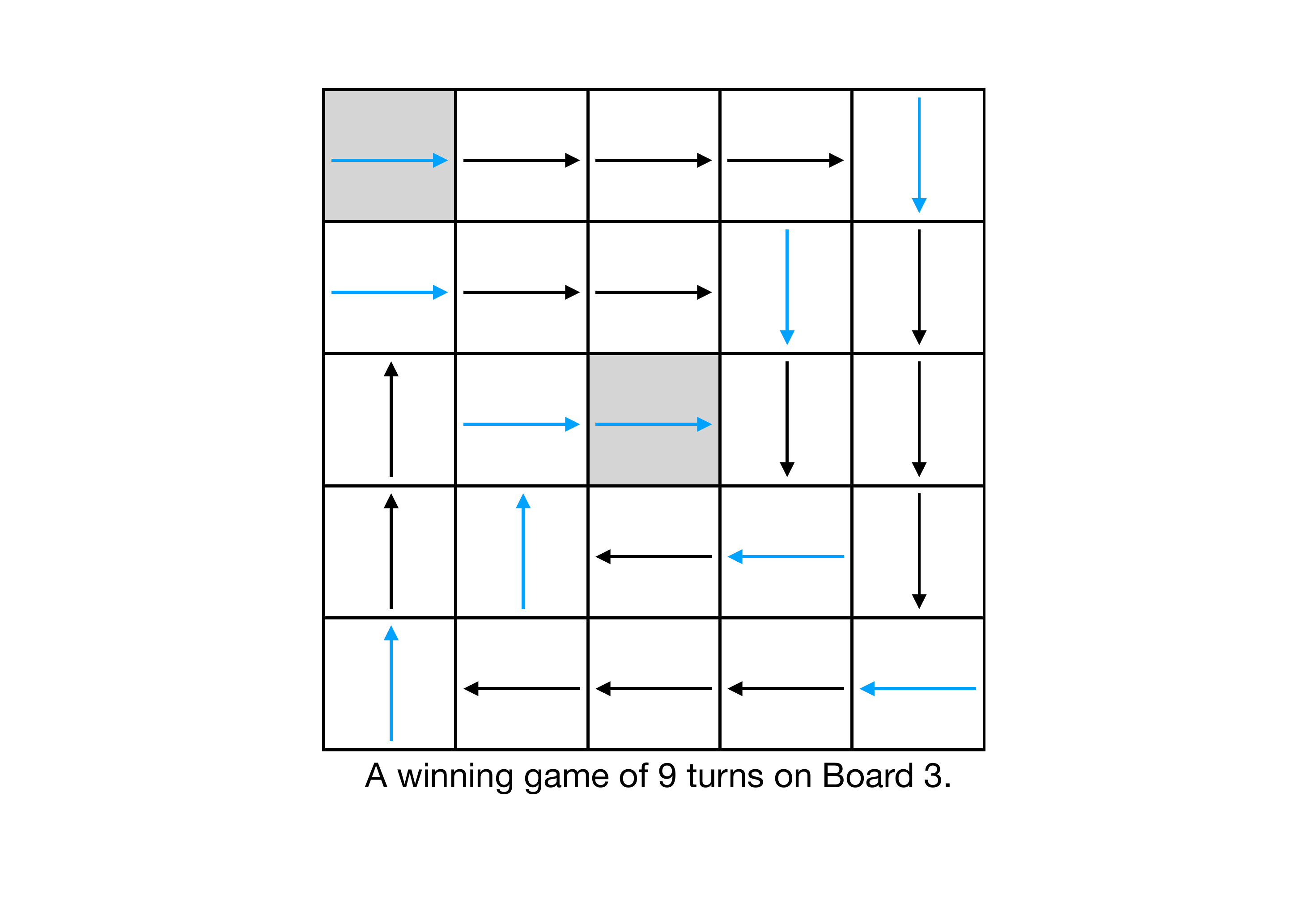}\end{center}
\begin{center}\includegraphics[width=12cm]{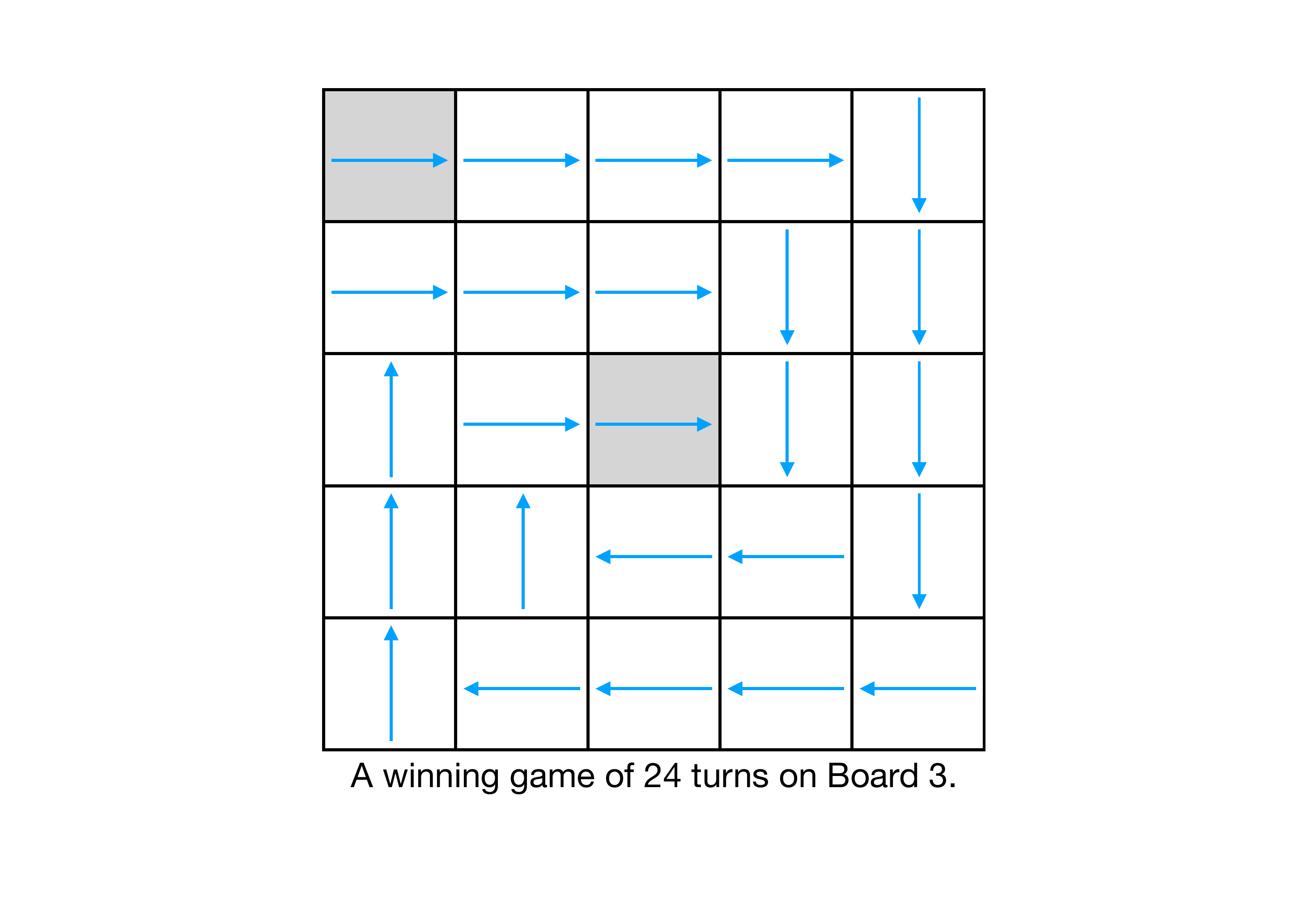}\end{center}

\end{example}

\section{Random boards}\label{section_random_boards}\subsection{The probability of a random board to be solvable}
Let $n\geq3$ be an odd integer. The set $\text{Mat}_{n\times
n}(\mathcal{D})$ is finite (it has cardinality $8^{n^2}$), and so we turn
it into a probability space by assigning each board with equal probability
(the discrete uniform measure). A random board is a board chosen from $\text{Mat}_{n\times
n}(\mathcal{D})$ according to this probability measure. This is of course
equivalent to assigning a direction to each entry in an $n\times n$ matrix
with probability $\frac{1}{8}$ to each direction (and the choices of each
two different entries are independent events). We ask $$\text {Prob}_{A\in\text{Mat}_{n\times
n}(\mathcal{D})}(A\text{ is solvable})=?$$i.e., what is the probability of
a random board to be solvable? 
\subsubsection{Trivial upper bound}\label{trivial_upper_bound_remark}Obviously,
a necessary (but not sufficient) condition for $A$ to be solvable is $a_{11}\in\{\rightarrow,\searrow,\downarrow\}$,
and so $\text
{Prob}_{A\in\text{Mat}_{n\times
n}(\mathcal{D})}(A\text{ is solvable})<\frac{3}{8}$.

\begin{proposition}[proved together with Noga Alon]\label{Prop_prob_solvable}$\lim\limits_{n\to\infty}\text {Prob}_{A\in\text{Mat}_{n\times
n}(\mathcal{D})}(A\text{
is solvable})=\frac{3}{8}$\footnote{Whenever we write limits of the form
$n\to\infty$ in this paper we mean that $n$ runs over the \emph{odd} natural
numbers.}.
\end{proposition}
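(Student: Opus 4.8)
The plan is to show that the ``obstruction'' to solvability coming from the upper left corner is essentially the only one, in the limit. Since $\mathrm{Prob}(A\text{ solvable}) < \frac38$ always holds (the trivial upper bound), it suffices to exhibit, for each of the three ``good'' starting directions, an event of probability tending to $1$ that forces solvability. Concretely, I would condition on $a_{11}\in\{\rightarrow,\searrow,\downarrow\}$ (an event of probability $\frac38$) and then prove
\[
\mathrm{Prob}\bigl(A\text{ solvable}\,\big|\,a_{11}\in\{\rightarrow,\searrow,\downarrow\}\bigr)\xrightarrow[n\to\infty]{}1,
\]
which immediately gives the result by multiplying by $\frac38$.

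The cleanest way to get that conditional probability is to produce a short ``highway'' to the center whose existence depends only on finitely many entries (or $o(n^2)$ entries), chosen so that the probability all required entries cooperate is $1-o(1)$. Here is one implementation. Consider the central row $i_0=\frac{n+1}{2}$. For the center to be reachable it is enough that some entry in that row, say at position $(i_0,j)$ with $j<\frac{n+1}{2}$, is $\rightarrow$, \emph{and} that this entry is actually reachable from $(1,1)$. So I would look for a position of the form $(i_0, i_0 - r)$ and ask that: (i) the corner entry directs into the interior along a diagonal or straight line that, after a bounded number of hops through ``funnel'' entries, lands somewhere in row $i_0$ strictly left of center; (ii) from that landing spot we can walk right to the center using a run of $\rightarrow$'s. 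The key probabilistic point is that among the roughly $n/2$ entries of row $i_0$ lying (weakly) to the left of the center, the probability that \emph{none} of them is $\rightarrow$ is $(7/8)^{\Theta(n)}\to0$; and given a leftmost such $\rightarrow$ entry, one only needs the entries strictly between it and the center in that row to also be $\rightarrow$ — which is the wrong event to ask for directly, since that again decays. So instead I would argue: let $j^\ast$ be the \emph{largest} $j<\frac{n+1}{2}$ in row $i_0$ with $a_{i_0,j}=\rightarrow$. If $j^\ast = \frac{n+1}{2}-1$ we are one step from the goal; more robustly, I would set up a greedy/iterative reachability argument down the central row and central column simultaneously, using the first \S \ref{section_definitions} definitions, and show that with probability $1-o(1)$ the center lies in the forward-reachable set of $(1,1)$.

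A more structural and perhaps cleaner route, which I would actually write up, is a two-stage coupling. Stage one: with probability $1-o(1)$ the reachable set from $(1,1)$ contains a full ``cross-neighborhood'' of the center, i.e.\ some entry on the central row or central column within distance $o(n)$ of the center — this follows because a random walk-like spreading argument, or even just the observation that the first row contains a long run behaving like a percolation highway, shows the reachable set from the corner is large (it contains $\Omega(n)$ cells of the central row, say, with high probability). Stage two: conditioned on reaching \emph{any} cell $(i_0,j)$ of the central row with $j<\frac{n+1}{2}$, the probability that the center is reachable from \emph{that} cell is at least $1 - (7/8)^{?}$... again this needs care. The honest fix is: among the $\Omega(n)$ reachable cells of the central row, with high probability one of them sits at a position $(i_0,j)$ such that the entire block $a_{i_0,j},a_{i_0,j+1},\dots$ up to the center happens to be all-$\rightarrow$; but the number of candidate starting cells is $\Omega(n)$ and each ``all-$\rightarrow$ to the center'' event, for a cell at distance $d$ from center, has probability $8^{-d}$, so we want a reachable cell at distance $O(1)$ — which is exactly the point where I expect the main difficulty to lie.

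\textbf{Main obstacle.} The genuine work is controlling reachability \emph{near} the center: showing that the forward-reachable set of $(1,1)$ comes within $O(1)$ of the center cell with probability $1-o(1)$, rather than merely within $o(n)$. I would handle this by a last-step / local analysis around the center: reveal the $O(1)$-neighborhood of $(\frac{n+1}2,\frac{n+1}2)$ last, and show that conditioned on the reachable set already being ``large'' (containing a positive fraction of some row or column passing near the center), there are $\Omega(n)$ essentially independent chances for a cell of that set to point directly (in one or two hops) at the center; each succeeds with probability bounded below by a positive constant \emph{once we allow two hops through a freshly-revealed intermediate cell}, so the failure probability is $c^{\Omega(n)}\to 0$. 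Assembling this with the (easier) spreading estimate for the reachable set from the corner, and then multiplying by the conditioning probability $\frac38$, yields $\lim_{n\to\infty}\mathrm{Prob}(A\text{ solvable})=\frac38$. I would expect the write-up to isolate the spreading estimate as a lemma and the near-center capture as a second lemma, with Proposition \ref{Prop_prob_solvable} following by combining them with the trivial upper bound of \S\ref{trivial_upper_bound_remark}.
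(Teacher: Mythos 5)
Your overall decomposition is the same as the paper's: condition on $a_{11}\in\{\rightarrow,\searrow,\downarrow\}$ and show the conditional probability of solvability tends to $1$, then multiply by $\frac38$ and use the trivial upper bound. But from there the proposal goes astray, and the root cause appears to be a misreading of the movement rule. By the definition of ``directing to'' (and as Board~0's two-jump solution illustrates), an arrow points to \emph{every} position along its ray, not just the adjacent one: if $a_{\frac{n+1}{2},j}=\rightarrow$ for any $j<\frac{n+1}{2}$, that single cell directs straight to the center in one hop. Your statements that one ``needs the entries strictly between it and the center in that row to also be $\rightarrow$'' and that an ``all-$\rightarrow$ to the center'' event at distance $d$ has probability $8^{-d}$ are therefore not about this game; the entire ``main obstacle'' of getting the reachable set within $O(1)$ of the center is a difficulty you manufactured, not one present in the problem. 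Moreover, even granting your framework, the write-up is not a proof: the spreading estimate (that the reachable set meets the central row in $\Omega(n)$ cells w.h.p.), the reveal-the-center-last conditioning, and the claimed independence of the ``$\Omega(n)$ essentially independent chances'' are all asserted, not established, and you explicitly leave the key step unresolved.

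The paper's argument is much shorter and avoids all conditioning subtleties by choosing witness games supported on disjoint sets of entries. If $a_{11}=\searrow$ the board is solvable in one move. If $a_{11}=\rightarrow$, then for each column $i\neq\frac{n+1}{2}$ consider the game $(1,1)\to(1,i)\to(\frac{n+1}{2},i)\to(\frac{n+1}{2},\frac{n+1}{2})$: it is possible iff $a_{1i}=\downarrow$ and $a_{\frac{n+1}{2},i}$ is the horizontal arrow toward the center, an event of probability $\frac{1}{64}$ depending only on those two entries (and for $i=\frac{n+1}{2}$ the two-move game has probability $\frac18$). Since different columns involve disjoint entries, these $n-1$ events are independent, so the conditional probability of unsolvability is at most $\frac78\bigl(\frac{63}{64}\bigr)^{n-2}\to0$; the case $a_{11}=\downarrow$ is symmetric via the first column. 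That is the whole proof, with an exponential error bound, whereas your route would require substantial additional work even to be made rigorous.
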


\begin{proof}To simplify the notation we write
$P_{n}$ instead of $\text {Prob}_{A\in\text{Mat}_{n\times
n}(\mathcal{D})}$.

$$P_{n}(A\text{ is solvable})=$$$$P_{n}(a_{11}=\rightarrow)\cdot
P_{n}(A\text{ is solvable}|a_{11}=\rightarrow)+$$$$P_{n}(a_{11}=\searrow)\cdot
P_{n}(A\text{
is solvable}|a_{11}=\searrow)+$$$$P_{n}(a_{11}=\downarrow)\cdot P_{n}(A\text{
is solvable}|a_{11}=\downarrow)=$$$$\frac{1}{8}[P_{n}(A\text{
is solvable}|a_{11}=\rightarrow)+P_{n}(A\text{
is solvable}|a_{11}=\searrow)+$$$$P_{n}(a_{11}=\downarrow)\cdot P_{n}(A\text{
is solvable}|a_{11}=\downarrow)].$$Thus, it is enough to prove that $$\lim\limits_{n\to\infty}P_{n}(A\text{
is solvable}|a_{11}=\rightarrow)=1,$$ $$\lim\limits_{n\to\infty}P_{n}(A\text{
is solvable}|a_{11}=\downarrow)=1$$ and $$\lim\limits_{n\to\infty}P_{n}(A\text{
is solvable}|a_{11}=\searrow)=1.$$ 
The latter is clear: if $a_{11}=\searrow$ then the sequence $(1,1),({\frac{n+1}{2},\frac{n+1}{2}})$
is always a winning game, and so $P(A\text{
is solvable}|a_{11}=\searrow)=1$.

Assume that $a_{11}=\rightarrow$. In this case we will define $n-1$ "easy
winning games" that may be possible or impossible on the board $A$. We denote
these games by $G_2,G_3,\dots ,G_n$.

For $2\leq i\leq n,i\neq\frac{n+1}{2}$ we define the game $G_i$ by the sequence
$(1,1),(1,i),(\frac{n+1}{2},i),({\frac{n+1}{2},\frac{n+1}{2}})$;
  
For $i=\frac{n-3}{2}+1=\frac{n+1}{2}$ we define the game $G_i$ by the sequence
$(1,1),(1,\frac{n+1}{2}),({\frac{n+1}{2},\frac{n+1}{2}})$;

\begin{center}\includegraphics[width=15cm]{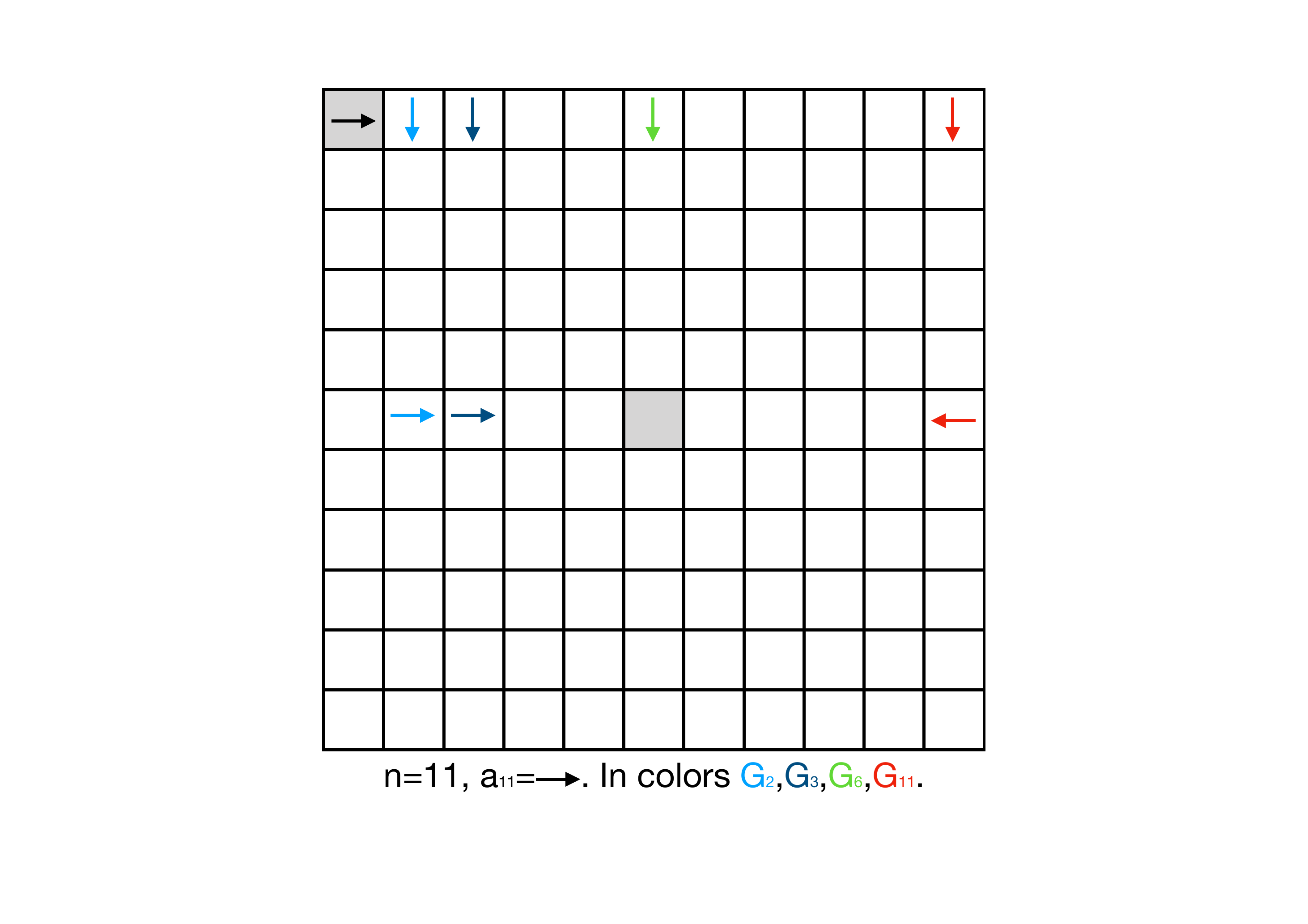}\end{center}
Note that for $i\neq \frac{n+1}{2}$ the probability of the game $G_i$ to
be possible on $A$ is $\frac{1}{64}$ (recall $a_{11}=\rightarrow$), and that
the probability of $G_{\frac{n+1}{2}}$ to be possible on $A$ is $\frac{1}{8}$.
Moreover, the events $$\{G_i\text{ is possible on }A\text{ given }a_{11}=\rightarrow\}_{i=2}^n$$
are independent, and so $$P_{n}(A\text{
is solvable}|a_{11}=\rightarrow)\geq 1-\prod\limits_{i=2}^{n}P_{n}(\neg (G_i\text{
is possible in }A|a_{11}=\rightarrow))=1-\frac{7}{8}(\frac{63}{64})^{n-2}.$$
In particular $$\lim\limits_{n\to\infty}P_{n}(A\text{
is solvable}|a_{11}=\rightarrow)=1.$$ 
Proving that $$\lim\limits_{n\to\infty}P_n(A\text{
is solvable}|a_{11}=\downarrow)=1$$ is the same as the latter case, where
now one defines $n-1$ "easy
winning games" $\{G'_i\}_{i=1}^{n}$ using the following (symmetric) picture:
\begin{center}\includegraphics[width=15cm]{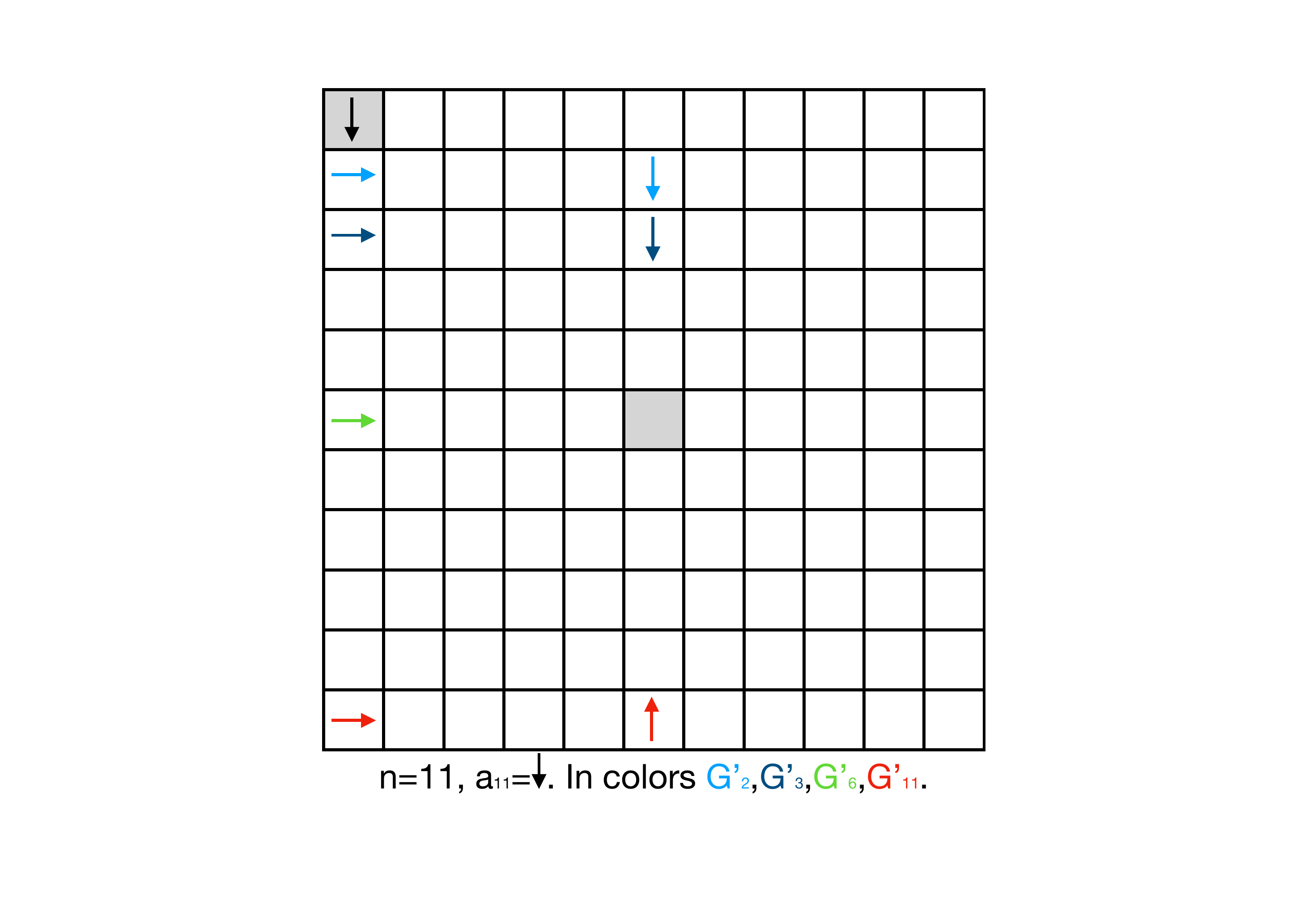}\end{center}
\end{proof}

\subsubsection{The exact asymptotic behaviour}\label{the_exact_asymptotic}Note that our proof of Proposition \ref{Prop_prob_solvable}
is quantitative and shows that $\frac{1}{3}-\text {Prob}_{A\in\text{Mat}_{n\times
n}(\mathcal{D})}(A\text{
is solvable})$,
which is the (strictly positive) error term in this limit, decays at least exponentially
with $n$. One can of course ask further questions regarding the exact asymptotic behaviour of $\lim\limits_{n\to\infty}\text {Prob}_{A\in\text{Mat}_{n\times
n}(\mathcal{D})}(A\text{
is solvable})$.
 
\subsection{The expected length of a random \emph{solvable} board}Let $n\geq3$
be an odd integer. We may consider the subset $Sol_n\subset\text{Mat}_{n\times
n}(\mathcal{D})$ consisting of all  solvable boards of size $n$. This is a finite set,
\ref{trivial_upper_bound_remark} may be read as $$\abs{Sol_{n}}< \frac{3}{8}\abs{\text{Mat}_{n\times
n}(\mathcal{D})}=\frac{3}{8}\cdot8^{n^2}=3\cdot8^{n^2-1},$$and Proposition
\ref{Prop_prob_solvable} may be read as $$\lim\limits_{n\to\infty}\abs{Sol_{n}}=\frac{3}{8}\abs{\text{Mat}_{n\times
n}(\mathcal{D})}=\frac{3}{8}\cdot8^{n^2}=3\cdot8^{n^2-1}.$$ We can now turn
$Sol_n$ into a probability space by assigning each board with equal probability
(the discrete uniform measure).
A random solvable board is a board chosen from $Sol_n$ according to this
probability measure. 
\subsubsection{Warning} The discrete uniform measure on  $\text{Mat}_{n\times
n}(\mathcal D)$ is
equivalent to assigning a direction to each entry in an $n\times n$ matrix
with probability $\frac{1}{8}$ to each direction (and the choices of each
two different entries are independent events); the uniform measure on $Sol_n$
does not have such an easy description. 

\

For the remaining of this section we consider the probability space of random
solvable boards. We ask $$E_n:=\mathbb{E}(\text{length }A)=?$$i.e., what is the expected
value of the length of a random solvable board of size $n$? We will show
that $\lim\limits_{n\to\infty}E_n=\frac{209}{96}$, i.e., the expected value
of the length of a big random solvable board tends to be slightly more than 2. Our proof below is quantitative and shows that $\abs{E_n-\frac{209}{96}}$,
which is the absolute value of error term in this limit, decays at least exponentially
with $n$ (cf. \ref{the_exact_asymptotic}).

\begin{lemma}[proved together with Noga Alon]\label{lemma_onetwothree_length_asymp_prob}For any odd integer
$n\geq3$:\begin{enumerate}\item ${Prob}_{A\in Sol_n}\{\text{length
}A=1\}>\frac{1}{3}$;\item ${Prob}_{A\in Sol_n}\{\text{length
}A=2\}>\frac{5}{32}$; \item ${Prob}_{A\in Sol_n}\{\text{length
}A=3\}>(1-(\frac{63}{64})^{n-2})\cdot\frac{49}{96}$; \item ${Prob}_{A\in Sol_n}\{\text{length
}A=k\}<(\frac{63}{64})^{n-2}\cdot\frac{49}{96}$ for any integer $k\geq4$.
\end{enumerate}
\end{lemma}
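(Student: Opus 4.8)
The plan is to count, for each small $\ell$, the number of boards of length exactly $\ell$ and divide by $\abs{Sol_n}$, using only the trivial bound $\abs{Sol_n}<3\cdot 8^{n^2-1}$ from \ref{trivial_upper_bound_remark}; part (4) will then drop out formally from parts (1)--(3). Write $m:=\frac{n+1}{2}$ and let $P_n$ be the uniform measure on $\text{Mat}_{n\times n}(\mathcal{D})$. For (1): a board has length $1$ exactly when $a_{11}$ directs to the center, i.e. exactly when $a_{11}=\searrow$ (the vector $(1,1)\to(m,m)$ being $(m-1,m-1)$); so there are precisely $8^{n^2-1}$ such boards, all solvable, and ${Prob}_{A\in Sol_n}\{\text{length }A=1\}=8^{n^2-1}/\abs{Sol_n}>\tfrac13$. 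For (2): if $a_{11}=\rightarrow$ then a short check shows the only cells of row $1$ that direct to the center are $(1,m)$ (via $\downarrow$) and $(1,n)$ (via $\swarrow$), so a $2$-step winning game exists iff the event $B_2:=\{a_{1m}=\downarrow\}\cup\{a_{1n}=\swarrow\}$ holds, in which case the board has length exactly $2$ (there is no $1$-step game since $a_{11}\ne\searrow$). By the reflection of the board across the main diagonal (which interchanges $\rightarrow\leftrightarrow\downarrow$ and preserves length) the symmetric statement holds with $a_{11}=\downarrow$. These two disjoint families have total size $2\cdot\tfrac18\bigl(1-(\tfrac78)^2\bigr)8^{n^2}=\tfrac{15}{32}\cdot 8^{n^2-1}$, whence ${Prob}_{A\in Sol_n}\{\text{length }A=2\}\geq\tfrac{15}{32}\cdot 8^{n^2-1}/\abs{Sol_n}>\tfrac{15}{96}=\tfrac{5}{32}$.

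For part (3) I again restrict to $a_{11}=\rightarrow$. By the previous paragraph, on $\{a_{11}=\rightarrow\}\cap\neg B_2$ there is no $1$- or $2$-step winning game, so such a board has length exactly $3$ as soon as \emph{some} $3$-step winning game exists; for this I reuse the $3$-step games $G_i$, $i\in\{2,\dots,n\}\setminus\{m\}$, from the proof of Proposition \ref{Prop_prob_solvable}. Given $a_{11}=\rightarrow$, these $n-2$ events $\{G_i\text{ possible}\}$ are independent (they involve disjoint pairs of cells $\{(1,i),(m,i)\}$), each of probability $\tfrac1{64}$, and exactly one of them, $G_n$ (which needs $a_{1n}=\downarrow$), involves a cell occurring in $B_2$; since $P_n(G_n\text{ not possible}\mid a_{1n}\neq\swarrow)=\tfrac{55}{56}$ and the other $n-3$ games are unaffected by conditioning on $\neg B_2$, one gets $P_n(\text{no }G_i\text{ possible}\mid a_{11}=\rightarrow,\ \neg B_2)=(\tfrac{63}{64})^{n-3}\cdot\tfrac{55}{56}\leq(\tfrac{63}{64})^{n-2}$, the last step being the elementary inequality $\tfrac{55}{56}\leq\tfrac{63}{64}$. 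As $P_n(\neg B_2\mid a_{11}=\rightarrow)=(\tfrac78)^2=\tfrac{49}{64}$, there are at least $8^{n^2-1}\cdot\tfrac{49}{64}\bigl(1-(\tfrac{63}{64})^{n-2}\bigr)$ boards of length $3$ with $a_{11}=\rightarrow$; doubling via the diagonal symmetry and dividing by $\abs{Sol_n}<3\cdot 8^{n^2-1}$ yields ${Prob}_{A\in Sol_n}\{\text{length }A=3\}>\bigl(1-(\tfrac{63}{64})^{n-2}\bigr)\tfrac{49}{96}$.

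For part (4): the events $\{\text{length }A=1\}$, $\{\text{length }A=2\}$, $\{\text{length }A=3\}$, $\{\text{length }A\geq 4\}$ partition $Sol_n$, and $\tfrac13+\tfrac5{32}+\tfrac{49}{96}=1$, so parts (1)--(3) give
\[{Prob}_{A\in Sol_n}\{\text{length }A\geq 4\}\;<\;1-\tfrac13-\tfrac5{32}-\bigl(1-(\tfrac{63}{64})^{n-2}\bigr)\tfrac{49}{96}\;=\;(\tfrac{63}{64})^{n-2}\tfrac{49}{96},\]
and for every integer $k\geq 4$ we have ${Prob}_{A\in Sol_n}\{\text{length }A=k\}\leq{Prob}_{A\in Sol_n}\{\text{length }A\geq 4\}<(\tfrac{63}{64})^{n-2}\tfrac{49}{96}$.

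The main obstacle is the bookkeeping in part (3). One must (a) pin down exactly which short winning games can occur, so that $\neg B_2$ really excludes \emph{all} $2$-step games and not merely the "obvious" ones, and (b) control the single dependence between the family $\{G_i\}$ and the event $B_2$ tightly enough to keep the exponent $n-2$ rather than the weaker $n-3$ one would get by simply discarding $G_n$ — this is exactly where $\tfrac{55}{56}\leq\tfrac{63}{64}$ is used. Everything else is elementary counting together with the uniform bound $\abs{Sol_n}<3\cdot 8^{n^2-1}$.
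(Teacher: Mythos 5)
Your proof is correct and takes essentially the same route as the paper: exact counts for lengths $1$ and $2$, the ``easy winning games'' $G_i$ from the proof of Proposition \ref{Prop_prob_solvable} combined with the bound $\abs{Sol_n}<3\cdot 8^{n^2-1}$ for length $3$, and part (4) deduced from $\frac{1}{3}+\frac{5}{32}+\frac{49}{96}=1$. Your explicit handling of the dependence between $G_n$ and the event $a_{1n}\neq\swarrow$ (via $\frac{55}{56}\leq\frac{63}{64}$) is a small, welcome refinement of a point the paper treats implicitly, but it does not change the argument.
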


\begin{proof} 

Fix and odd integer $n\geq3$. 

\begin{enumerate}

\item The board $A$ has length 1 if and only if $a_{11}=\searrow$, so there
exist exactly $8^{n^2-1}$ solvable boards of length 1. Thus, $$\text
{Prob}_{A\in Sol_n}\{\text{length
}A=1\}=\frac{8^{n^2-1}}{\abs{Sol_n}}>\frac{8^{n^2-1}}{3\cdot8^{n^2-1}}=\frac{1}{3}.$$

\item The board $A$ has length 2 if and only if one of the following holds:

(i) $a_{11}=\rightarrow$ and either $a_{1,\frac{n+1}{2}}=\downarrow$ or $a_{1,n}=\swarrow$;

(ii) $a_{11}=\downarrow$ and either $a_{\frac{n+1}{2},1}=\rightarrow$ or
$a_{n,1}=\nearrow$.

We conclude that there exist exactly $2\cdot(1-\frac{7}{8}\cdot\frac{7}{8})\cdot8^{n^2-1}=2\cdot\frac{15}{64}\cdot8^{n^2-1}=30\cdot8^{n^2-3}$
solvable boards of length 2. Thus, $$\text
{Prob}_{A\in Sol_n}\{\text{length
}A=2\}=\frac{30\cdot8^{n^2-3}}{\abs{Sol_n}}>\frac{30\cdot8^{n^2-3}}{3\cdot8^{n^2-1}}=\frac{5}{32}.$$
 
\item Assume that $a_{11}=\rightarrow$ and retain the notion of the $n-1$
"easy winning games" $G_2,G_3,\dots ,G_n$ introduced in the proof of Proposition
\ref{Prop_prob_solvable}. A necessary condition for the board $A$ to have
length 3 is $a_{1,\frac{n+1}{2}}\neq\downarrow$ and $a_{1,n}\neq\swarrow$.
Note that if in addition $G_i$ is possible on the board $A$ for some $i\neq\frac{n+1}{2}$
then it is guaranteed that $A$ has length 3. Recall that for $i\neq \frac{n+1}{2}$ the probability of the game $G_i$ to
be possible on $A$ when $A$ is chosen by the uniform discrete probability
from $\{A\in\text{Mat}_{n\times n}(\mathcal D)|a_{11}=\rightarrow\}$ is $\frac{1}{64}$.
Moreover, the events $$\{G_i\text{ is possible on }A\text{ given }a_{11}=\rightarrow\}_{i\in\{1,2,\dots,\frac{n-1}{2},\frac{n+3}{2},\frac{n+5}{2},\dots,n\}}$$
are independent events. We conclude that there
exist at least $(\frac{7}{8})^2\cdot(1-(\frac{63}{64})^{n-2})\cdot 8^{n^2-1}$
boards $A$ of length 3 with $a_{11}=\rightarrow$.  A similar argument shows
that there
exist at least $(\frac{7}{8})^2\cdot(1-(\frac{63}{64})^{n-2})\cdot 8^{n^2-1}$
boards $A$ of length 3 with $a_{11}=\downarrow$. Altogether, we showed that$$\text
{Prob}_{A\in Sol_n}\{\text{length
}A=3\}\geq\frac{2\cdot (\frac{7}{8})^2\cdot(1-(\frac{63}{64})^{n-2})\cdot
8^{n^2-1}}{\abs{Sol_n}}$$ $$>\frac{2\cdot (\frac{7}{8})^2\cdot(1-(\frac{63}{64})^{n-2})\cdot
8^{n^2-1}}{3\cdot8^{n^2-1}}=(1-(\frac{63}{64})^{n-2})\cdot\frac{49}{96}.$$
\item It follows immediately from (1),(2),(3) and the facts that $\frac{1}{3}+\frac{5}{32}+\frac{49}{96}=1$
and that for any fixed $n$ we have $\sum\limits_{k=1}^{n^2}\text
{Prob}_{A\in Sol_n}\{\text{length
}A=k\}=1$. \end{enumerate}\end{proof}

\begin{lemma}\label{lemma_similar_to_monotone_convergence}For any $n\in2\mathbb{N}+1$
and any $m\in\mathbb{N}$ let $p_{nm}\in[0,1]$ be such that\begin{enumerate}
\item for any fixed $n$ we have that $p_{nk}=0$ for any $k>n^2$; 
\item for any fixed $n$ we have $\sum\limits_{m=1}^\infty p_{nm}\big(=\sum\limits_{m=1}^{n^2}
p_{nm}\big)=1$;
\item for any fixed $n$ we have $$p_{nm}\geq\begin{cases}
\frac{1}{3},\text{ }\text{ }\text{ }\text{ }\text{ }\text{ }\text{ }\text{
}\text{ }\text{ }\text{ }\text{ }\text{ }\text{ }\text{ }\text{ }\text{ }\text{
}\text{ }\text{ if }m=1\\
\frac{5}{32},\text{ }\text{ }\text{ }\text{ }\text{ }\text{ }\text{ }\text{
}\text{ }\text{ }\text{ }\text{ }\text{ }\text{ }\text{ }\text{ }\text{ }\text{
}\text{ if }m=2\\
(1-(\frac{63}{64})^{n-2})\cdot\frac{49}{96},\text{ if }m=3
\end{cases}.$$ 
\end{enumerate}
Then, $\lim\limits_{n\to\infty}(\sum\limits_{k=1}^{\infty}k\cdot
p_{nk})=\frac{209}{96}$.
\end{lemma}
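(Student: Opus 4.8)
The plan is to show that the three prescribed lower bounds, together with the normalization in condition (2) and the vanishing in condition (1), force $\sum_k kp_{nk}$ to be a vanishingly small perturbation of a fixed arithmetic quantity. The starting point is the purely numerical observation that, writing $a_1:=\frac13$, $a_2:=\frac{5}{32}$, $a_3:=\frac{49}{96}$, one has both $a_1+a_2+a_3=1$ and $1\cdot a_1+2\cdot a_2+3\cdot a_3=\frac{209}{96}$. So $\frac{209}{96}$ is exactly the weighted sum one gets from the ``ideal'' distribution concentrated on $\{1,2,3\}$ with masses $a_1,a_2,a_3$; the content of the lemma is that the hypotheses squeeze $p_{n\bullet}$ towards this configuration as $n\to\infty$.

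Concretely, I would fix $n$, set $\epsilon_n:=(63/64)^{n-2}$, and introduce the defects $q_1:=p_{n1}-a_1$, $q_2:=p_{n2}-a_2$, $q_3:=p_{n3}-a_3+a_3\epsilon_n$, and the tail mass $r:=\sum_{k\geq4}p_{nk}$. Condition (3) says precisely that $q_1,q_2,q_3\geq0$, and $r\geq0$ trivially. Summing the four definitions and using $\sum_{k\geq1}p_{nk}=1$ (condition (2)) together with $a_1+a_2+a_3=1$ yields the key identity $q_1+q_2+q_3+r=a_3\epsilon_n$; in particular each of $q_1,q_2,q_3,r$ lies in $[0,\,a_3\epsilon_n]$.

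Next I would expand $S_n:=\sum_k kp_{nk}=p_{n1}+2p_{n2}+3p_{n3}+\sum_{k\geq4}kp_{nk}=\frac{209}{96}+(q_1+2q_2+3q_3)-3a_3\epsilon_n+\sum_{k\geq4}kp_{nk}$. For the lower bound, discard the nonnegative quantities $q_1+2q_2+3q_3$ and $\sum_{k\geq4}kp_{nk}$ to get $S_n\geq\frac{209}{96}-3a_3\epsilon_n$. For the upper bound, use $q_1+2q_2+3q_3\leq3(q_1+q_2+q_3)\leq3a_3\epsilon_n$ to absorb the $-3a_3\epsilon_n$ term, and bound the tail using condition (1): since $p_{nk}=0$ for $k>n^2$, we have $\sum_{k\geq4}kp_{nk}\leq n^2\sum_{k\geq4}p_{nk}=n^2 r\leq n^2 a_3\epsilon_n$. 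This gives $\frac{209}{96}-3a_3\epsilon_n\leq S_n\leq\frac{209}{96}+n^2 a_3\epsilon_n$. Since $\epsilon_n\to0$ and $n^2\epsilon_n\to0$ (exponential decay beats the polynomial factor), the squeeze theorem gives $\lim_{n\to\infty}S_n=\frac{209}{96}$, and in fact the error is at most exponentially small, consistent with the quantitative claim made in the text.

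The only delicate point is the tail estimate: a priori $\sum_{k\geq4}kp_{nk}$ carries weights $k$ up to $n^2$, so it is essential that the slack $\epsilon_n=(63/64)^{n-2}$ decays exponentially — this is exactly where condition (1) enters and why the convergence rate is exponential. Everything else is bookkeeping built on the two arithmetic identities $a_1+a_2+a_3=1$ and $a_1+2a_2+3a_3=\frac{209}{96}$.
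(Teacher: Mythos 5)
Your proof is correct and follows essentially the same route as the paper: both arguments exploit that the three lower bounds sum to $1-\frac{49}{96}\left(\frac{63}{64}\right)^{n-2}$, so all the slack (excess mass at $m\leq3$ plus the whole tail) is exponentially small, and then squeeze $\sum_k k\,p_{nk}$ between two bounds converging to $\frac{209}{96}$. Your defect-variable bookkeeping and the tail estimate $n^2 a_3\epsilon_n$ (versus the paper's cruder $n^4$ factor) are only cosmetic differences.
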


\begin{proof}First note that $$\frac{1}{3}+2\cdot\frac{5}{32}+3\cdot\frac{49}{96}=\frac{32+30+147}{96}=\frac{209}{96}.$$
Fix an odd integer $n\geq3$. Note that (1)-(3) easily imply that $$0\leq p_{nm}\leq(\frac{63}{64})^{n-2}\cdot\frac{49}{64} \text{
for any } m\geq4,$$and  $$p_{nm}\leq\begin{cases}
\frac{1}{3}+(\frac{63}{64})^{n-2}\cdot\frac{49}{96},\text{ }\text{ }\text{
}\text{ }\text{ }\text{ }\text{ }\text{
}\text{ }\text{ }\text{ }\text{ }\text{ }\text{ }\text{ }\text{ }\text{ }\text{
}\text{ }\text{ if }m=1\\
\frac{5}{32}+(\frac{63}{64})^{n-2}\cdot\frac{49}{96},\text{ }\text{ }\text{
}\text{ }\text{ }\text{ }\text{ }\text{
}\text{ }\text{ }\text{ }\text{ }\text{ }\text{ }\text{ }\text{ }\text{ }\text{
}\text{ if }m=2\\
\frac{49}{96},\text{ }\text{ }\text{ }\text{ }\text{ }\text{ }\text{ }\text{
}\text{ }\text{ }\text{ }\text{ }\text{ }\text{ }\text{ }\text{ }\text{ }\text{
}\text{ }\text{ }\text{ }\text{ }\text{ }\text{ }\text{ }\text{ }\text{ }\text{
}\text{ }\text{ }\text{ }\text{ }\text{ }\text{ }\text{ }\text{ }\text{ }\text{
if }m=3
\end{cases}.$$

Thus, we have $$E_n:=\sum\limits_{k=1}^{\infty}k\cdot
p_{nk}=\sum\limits_{k=1}^{n^2}k\cdot
p_{nk}$$ $$\leq\big(\frac{1}{3}+(\frac{63}{64})^{n-2}\cdot\frac{49}{96}\big)+2\cdot\big(\frac{5}{32}+(\frac{63}{64})^{n-2}\cdot\frac{49}{96}\big)+3\cdot\frac{49}{64}+\sum\limits_{k=4}^{n^2}k\cdot
p_{nk}$$ $$\leq\big(\frac{1}{3}+(\frac{63}{64})^{n-2}\cdot\frac{49}{96}\big)+2\cdot\big(\frac{5}{32}+(\frac{63}{64})^{n-2}\cdot\frac{49}{96}\big)+3\cdot\frac{49}{64}+n^{4}\cdot(\frac{63}{64})^{n-2}\cdot\frac{49}{64}\xrightarrow[n\to\infty]{}\frac{209}{96},$$and
so $\limsup\limits_{n\to\infty}E_n\leq\frac{209}{96}$.

Similarly we have $$E_n:=\sum\limits_{k=1}^{\infty}k\cdot
p_{nk}=\sum\limits_{k=1}^{n^2}k\cdot
p_{nk}$$ $$\geq\frac{1}{3}+2\cdot\frac{5}{32}+3\cdot(1-(\frac{63}{64})^{n-2})\cdot\frac{49}{96}+\sum\limits_{k=4}^{n^2}k\cdot
p_{nk}$$ $$\geq\frac{1}{3}+2\cdot\frac{5}{32}+3\cdot(1-(\frac{63}{64})^{n-2})\cdot\frac{49}{96}\xrightarrow[n\to\infty]{}\frac{209}{96},$$and
so $\liminf\limits_{n\to\infty}E_n\geq\frac{209}{96}$.

We showed $$\frac{209}{96}\leq\liminf\limits_{n\to\infty}E_n\leq\limsup\limits_{n\to\infty}E_n\leq\frac{209}{96},$$
which implies $\lim\limits_{n\to\infty}E_n=\lim\limits_{n\to\infty}(\sum\limits_{k=1}^{\infty}k\cdot
p_{nk})=\frac{209}{96}$. \end{proof}

\begin{theorem}\label{theorem_on_expected_length}Let $E_n$ be the expected length of a random solvable board
of size $n$. Then, $$\lim\limits_{n\to\infty}E_n=\frac{209}{96}.$$ \end{theorem}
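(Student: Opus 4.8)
The plan is to combine the two preceding lemmas in the obvious way. By definition $E_n$ is the expectation of the random variable $\text{length }A$ on the probability space $Sol_n$, so for each odd $n\geq 3$ we have
$$E_n=\sum_{k=1}^{n^2}k\cdot{Prob}_{A\in Sol_n}\{\text{length }A=k\}.$$
Set $p_{nm}:={Prob}_{A\in Sol_n}\{\text{length }A=m\}$ for $m\in\mathbb{N}$ (with $p_{nm}=0$ for $m>n^2$, since no solvable board of size $n$ can have length exceeding $n^2$).

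The first step is to verify that the array $(p_{nm})$ satisfies the three hypotheses of Lemma \ref{lemma_similar_to_monotone_convergence}. Hypothesis (1) is immediate from the bound $\text{length }A\leq n^2$. Hypothesis (2) is just the statement that the probabilities of the (mutually exclusive and exhaustive) events $\{\text{length }A=m\}$ sum to $1$. Hypothesis (3) is exactly parts (1), (2) and (3) of Lemma \ref{lemma_onetwothree_length_asymp_prob}.

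The second step is to invoke Lemma \ref{lemma_similar_to_monotone_convergence}, which then gives directly
$$\lim_{n\to\infty}E_n=\lim_{n\to\infty}\Big(\sum_{k=1}^{\infty}k\cdot p_{nk}\Big)=\frac{209}{96}.$$
There is essentially no obstacle here: all the real work has already been done in the two lemmas. The only thing to be careful about is the bookkeeping — making sure that the $p_{nm}$ I define are genuinely a probability distribution for each fixed $n$, and that the indexing matches (in particular that Lemma \ref{lemma_onetwothree_length_asymp_prob}(4), while not needed as an input to Lemma \ref{lemma_similar_to_monotone_convergence}, is consistent with the tail bounds derived inside its proof). Optionally one can remark that since the bounds in Lemma \ref{lemma_onetwothree_length_asymp_prob} and the estimates in Lemma \ref{lemma_similar_to_monotone_convergence} are all explicit, the convergence is at an exponential rate, i.e. $|E_n-\tfrac{209}{96}|$ decays at least like $n^4(\tfrac{63}{64})^{n}$, matching the claim made before the statement.
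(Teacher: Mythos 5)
Your proposal is correct and takes exactly the same route as the paper, whose proof is the one-line observation that setting $p_{nm}:=\text{Prob}_{A\in Sol_n}\{\text{length }A=m\}$ makes Lemma \ref{lemma_onetwothree_length_asymp_prob} supply the hypotheses of Lemma \ref{lemma_similar_to_monotone_convergence}. Your explicit verification of hypotheses (1)--(3) and the remark on the exponential rate are just a slightly more detailed write-up of the same deduction.
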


\begin{proof}It follows from Lemma \ref{lemma_onetwothree_length_asymp_prob} and Lemma \ref{lemma_similar_to_monotone_convergence} by setting $p_{nm}:=\text
{Prob}_{A\in Sol_n}\{\text{length
}A=m\}$. \end{proof}

\section{Longest possible board}\label{section_ML}For a fixed odd integer $n\geq3$ we may ask how "complicated" can a solvable board of size $n$ be, or more precisely what is the maximal length of a solvable board of size $n$. Denote this maximal length by $$ML(n):=\max\limits_{A\in Sol_n}\{\text{length }A\}.$$

\begin{example}\label{example_game_of_length_3n}For any odd $n\geq11$ there exists a board of size $n$ whose length is $3n$. Here is an explicit such board for $n=11$ (the blue arrows indicate the -- unique in this case -- shortest winning game):

\begin{center}\includegraphics[width=15cm]{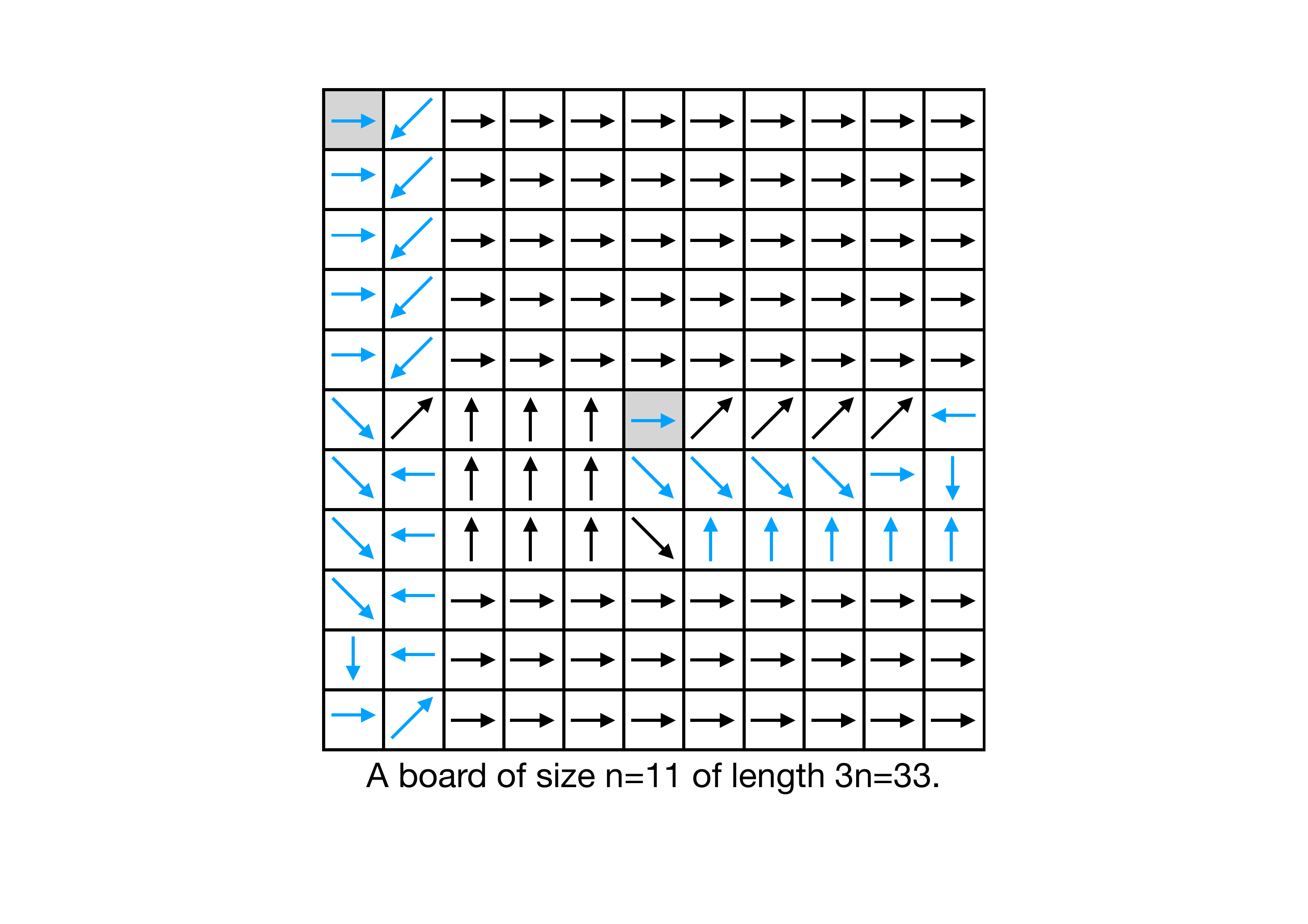}\end{center}
For $n\geq13$ we construct a similar board of size $3n$. E.g., for $n=13$ we take the $n=11$ board above, duplicate the third and the seventh columns, and then duplicate the second and the ninth rows. Similar construction works for any greater odd $n$ as well.

\end{example}

\subsection{Trivial bounds}Example \ref{example_game_of_length_3n} shows that $ML(n)\geq 3n$ for all $n\geq11$. Obviously the length of any solvable board of size $n$ is at most $n^2$, and so $ML(n)\leq n^2$. 

\begin{conjecture}\label{main_conjecture}
As $n\to\infty$, $ML(n)=O(n)$.
\end{conjecture}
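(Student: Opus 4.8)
The conjecture asserts $ML(n) = O(n)$, i.e.\ that there is an absolute constant $C$ with $ML(n) \le Cn$ for all odd $n$. Since this is stated as an open conjecture in the paper, I do not expect a complete proof; what follows is the strategy I would pursue, the partial results it plausibly yields, and the point at which I expect it to break down. The natural first move is to reformulate the problem in terms of the directed graph $\Gamma_A$ associated to a board $A$ (the graph whose vertices are the $n^2$ positions, with an edge $(i,j) \to (i',j')$ whenever $a_{ij}$ is directing to $(i',j')$). The length of a solvable board is the length of a shortest directed path from $(1,1)$ to the center, so $ML(n)$ is the maximum over all solvable $A$ of this graph distance. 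The key structural feature to exploit is that every vertex has out-degree at most one \emph{choice of direction} but that direction can point to many cells; more importantly, a shortest path can never revisit a row-or-column ``ray'' configuration, so one wants a monotonicity or potential-function argument.

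\textbf{Key steps.} First I would set up a potential function $\Phi$ on positions that decreases by a bounded amount along any edge of $\Gamma_A$ that could appear on a \emph{shortest} path. A first candidate is a weighted combination of the $L^1$ distance to the center and a count of ``direction changes'' forced by the geometry; the idea is that because each of the eight directions moves monotonically in both coordinates, a shortest path decomposes into maximal monotone runs, and one must bound the number of such runs by $O(1)$ or the total length of each run by $O(n)$ with only $O(1)$ runs. Second, I would try to prove a ``no long detour'' lemma: if a shortest path from $(1,1)$ to the center has length $> Cn$, then it must contain two positions $p, q$ lying on a common ray (same row, column, or diagonal) with $p$ earlier than $q$ and $a_p$ directing toward $q$ — but then the subpath between them could be shortcut, contradicting minimality. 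Making this precise requires a pigeonhole over the $O(n)$ rays/columns/diagonals: a path of length $L$ visits $L$ cells, and if $L \gg n$ then many cells share a ray; the subtlety is that sharing a ray is necessary but not sufficient for a shortcut, since the earlier cell's arrow must actually point along that ray toward the later cell. Third, I would attempt to combine this with the torus result (Proposition \ref{proposition_on_ML_on_torus}), where $\overline{ML}(n) = \Theta(n)$ is already known, by arguing that the planar obstruction (dead-end cells at the boundary) can only \emph{decrease} reachability, not increase path length — though this last point is delicate because removing edges can force longer detours.

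\textbf{Main obstacle.} The crux — and the reason this is a conjecture rather than a theorem — is the gap between ``shares a ray'' and ``admits a shortcut.'' A path of length $\gg n$ forces many cells onto common rays by pigeonhole, but an adversarial board can arrange that whenever $p$ and $q$ share a ray with $p$ before $q$ on the shortest path, the arrow $a_p$ points in some \emph{other} direction (the direction that continues the path), so no shortcut exists. To rule this out one would need to show that the shortest path is forced to ``turn'' along rays it has already visited often enough to create a contradiction, and I do not see a clean counting argument that forces this; the eight-direction freedom seems to give the adversary just enough room to build spiral-like configurations. A more promising but still incomplete line is to prove the weaker Conjecture \ref{weak_conjecture} ($ML(n) = o(n^2)$) by showing that a shortest path of length $L$ cannot use more than $O(n^{3/2})$ cells before two of them coincide in a shortcut-compatible way — a second-moment or incidence-geometry estimate on the bipartite incidence between path-cells and rays — which would already be new. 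I would present the reformulation and the ``no long detour'' lemma as the backbone, flag the adversarial spiral construction as the precise gap, and leave the full $O(n)$ bound open as stated.
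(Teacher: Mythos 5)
You have correctly recognized that this statement is Conjecture \ref{main_conjecture}, which the paper leaves open: there is no proof in the paper to compare against, and your proposal rightly refrains from claiming one. What you sketch is nevertheless a faithful reconstruction of the closest argument the paper does contain, namely the proof of Proposition \ref{proposition_on_ML_on_torus} that $\overline{ML}(n)\leq 4n$ for torus boards. Your ``no long detour'' lemma is exactly the paper's line-elimination argument: a shortest game can never return to a line (row, column, or diagonal) it has already left, since otherwise the earlier cell would point at the later one and the intermediate segment could be shortcut, and there are only $4n$ lines. Your diagnosis of why this does not transfer to planar boards is also precisely the right one: on the torus an arrow along a line reaches \emph{every} other cell of that line (wraparound), so ``same line'' implies ``shortcut exists,'' whereas on a planar board the arrow covers only a half-line, and an adversarial board can route the path back onto a previously visited line on the wrong side of every arrow --- the spiral-type configurations of Example \ref{example_game_of_length_3n} (which give $ML(n)\geq 3n$) are exactly the boards that exploit this. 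So your account of the gap matches the actual state of the problem. Two minor cautions: your proposed potential-function approach with ``$O(1)$ monotone runs'' cannot work as stated, since the $3n$-length examples already have $\Theta(n)$ direction changes on the shortest path, so any potential argument must allow linearly many turns; and your suggested route to Conjecture \ref{weak_conjecture} via an $O(n^{3/2})$ incidence bound is plausible-sounding but unsubstantiated --- you would need to explain what forces two path cells to share a ray \emph{in a shortcut-compatible orientation}, which is the same obstruction in disguise. As a statement of strategy and of where the difficulty lies, your proposal is accurate; as a proof it is, as you say yourself, incomplete, which is consistent with the statement's status in the paper.
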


The following is a weaker conjecture: 

\begin{conjecture}\label{weak_conjecture}
As $n\to\infty$, $ML(n)=o(n^{2})$.
\end{conjecture}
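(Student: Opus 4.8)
The plan is to attack the sub-quadratic statement $ML(n)=o(n^{2})$ of Conjecture~\ref{weak_conjecture}; I should say at the outset that this is an open problem and I do not expect it to yield without a genuinely new global idea, so most of what follows is a description of the natural starting point and of where I expect to get stuck. Fix an odd $n$, a solvable board $A$ whose length equals $ML(n)$, and a shortest winning game $(1,1)=p_{0},p_{1},\dots ,p_{k}$ with $k=ML(n)$. The first step is the elementary \emph{no-shortcut lemma}: if $0\le i<j\le k$ and the arrow at $p_{i}$ is directing to $p_{j}$ (equivalently, $p_{j}$ lies on the open ray out of $p_{i}$ in the direction of its arrow), then $j=i+1$ --- for otherwise $p_{0},\dots ,p_{i},p_{j},p_{j+1},\dots ,p_{k}$ would be a strictly shorter winning game. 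This gives at once that the $p_{i}$ are pairwise distinct (hence the trivial $k\le n^{2}-1$), and, more usefully, that for every $i<k$ the cell $p_{i+1}$ is the \emph{only} one among $p_{i+1},\dots ,p_{k}$ on the ray out of $p_{i}$.

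The second step converts this into a one-dimensional monotonicity along each of the $O(n)$ ``lines'' of the board --- the $n$ rows, the $n$ columns, and the $4n-2$ diagonals. Call $p_{i}$ (for $i<k$) an \emph{$\ell$-source} if $p_{i}\in\ell$ and its arrow points along $\ell$; iterating the no-shortcut lemma shows that, reading the $\ell$-sources in the order they are visited, their coordinates along $\ell$ are confined to an interval that shrinks at (almost) every step, so a single line carries only $O(|\ell|)=O(n)$ of them. One also gets, essentially for free, that the path never makes two consecutive moves in the same one of the eight directions, nor three consecutive moves along a single line. The obstruction now is that summing these per-line bounds only re-proves $ML(n)=O(n^{2})$: each move-source lies on exactly one row, one column and two diagonals and is an $\ell$-source for precisely the line matching its arrow, while the rows (resp.\ the columns, resp.\ the diagonals of a fixed slope) have total length exactly $n^{2}$; so $k=\sum_{\ell}\#\{\ell\text{-sources}\}$ is bounded by $4n^{2}$ and not better.

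Breaking the $n^{2}$ barrier requires using that $p_{0},\dots ,p_{k}$ is a connected path, not just a set of move-sources obeying one-dimensional monotonicity: the ray out of $p_{i}$ always contains the successor $p_{i+1}$ and no later vertex. The route I would try is a ``conservation of routing capacity'' across the four families of lines --- arriving at $p_{i+1}$ should fire the no-shortcut constraint simultaneously on its row, its column and both its diagonals, forcing the shrinking intervals on many lines to contract together, so that a path of length $\gg n^{3/2}$ (say) must exhaust some single line. Pinning this down is exactly the hard part, and I do not currently see how. Short of that, I would (i)~try to prove a sub-quadratic bound first for \emph{generic} boards, or in the graph-of-a-board model of Section~\ref{section_graphs}, where it is more transparent which rays meet which vertices; and (ii)~look for an entropy/potential argument showing that each move destroys a constant fraction of the remaining routing freedom, which would at least yield something like $O(n^{2}/\log n)$. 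I see no approach along these lines that would reach the stronger $ML(n)=O(n)$ of Conjecture~\ref{main_conjecture}.
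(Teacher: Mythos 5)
You have not proved the statement, and you say so yourself --- and that is the correct assessment, because Conjecture~\ref{weak_conjecture} is stated in the paper as an open conjecture with no proof; the only theorem in that direction is Proposition~\ref{proposition_on_ML_on_torus}, which establishes the much stronger $\overline{ML}(n)=\Theta(n)$ but only for \emph{torus} boards. Your partial observations are sound: the no-shortcut lemma is exactly right (any later vertex of a shortest winning game lying on the ray out of $p_{i}$ would give a strictly shorter game), it does imply that the vertices are distinct, that no two consecutive moves share a direction, and that no three consecutive moves lie on one line; and your accounting is honest that summing the per-line bounds merely recovers $ML(n)\leq n^{2}$. It is worth noting that your line-elimination idea is precisely the planar shadow of the paper's torus argument: there, an arrow pointing along a line reaches \emph{every} other cell of that line (because the line wraps around), so each move permanently eliminates one of the $4n$ lines and $\overline{ML}(n)\leq 4n$ follows. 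On the plane the arrow only covers a ray, so a line can be re-entered on the far side of an earlier source, and this is exactly where both your argument and the paper's stop short of Conjectures~\ref{main_conjecture} and~\ref{weak_conjecture}.

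The concrete gap, then, is the one you identify but do not fill: some mechanism forcing the ray-constraints on the four line families to contract \emph{jointly}, rather than independently, so that a path of length $\omega(n)$ (or at least $\Omega(n^{2})$) exhausts its routing freedom. Neither the ``conservation of routing capacity'' heuristic nor the proposed entropy/potential argument is carried out, and no sub-quadratic bound --- not even $O(n^{2}/\log n)$ --- is actually derived. The suggestion to first treat generic boards also would not settle the conjecture, since $ML(n)$ is a worst-case quantity and Section~\ref{section_random_boards} already shows that random solvable boards have bounded expected length, so genericity gives no leverage on the extremal boards (cf.\ Example~\ref{example_game_of_length_3n}, which shows $ML(n)\geq 3n$ via a highly non-generic spiral construction). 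In short: your write-up is a reasonable map of the territory and contains no false claims, but the conjecture remains open both in the paper and in your proposal.
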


If conjecture \ref{main_conjecture} is true than one may ask what is the asymptotic behaviour exactly, e.g., $$c:=\inf\{C\in\bR_{>0}|\exists N\in\mathbb{N}\text{ s.t. }ML(n)\leq C\cdot n\text{ }\text{ }\forall n>N\}=?$$Example \ref{example_game_of_length_3n} shows that in this case $c\geq3$.

\section{The graph of a board and isomorphic boards}\label{section_graphs}

\begin{definition}\label{def_graph_of_board}Let $A\in\text{Mat}_{n\times n}(\mathcal{D})$ be a board of size $n$. We define $G(A)$, the directed "double rooted" graph associated to $A$: $G(A)$ is the directed graph with $n^2$ vertices labeled by $$(i,j)\in\{1,\dots,n\}\times\{1,\dots,n\}$$ (the positions of the matrix
$A$), whose "beginning root" is $(1,1)$, whose "end root" is $(\frac{n+1}{2},\frac{n+1}{2})$, and with an edge pointing from $(i,j)$ to $(i',j')$ if and only if $a_{ij}$ is directing to the position $(i',j')$. 

Then, a game on the board $A$ corresponds to a path on the directed graph that starts in the beginning root, and it is a winning game if and only if this path ends at the end root. The length of a board $A$ is the shortest path from $(1,1)$ to $(\frac{n+1}{2},\frac{n+1}{2})$ on $G(A)$, where the length of a path is defined to be the number of edges it contains (counted with multiplicities). 
\end{definition}

\begin{example}Board 4, its associated graph and two games on them are shown below:
\begin{center}\includegraphics[width=14cm]{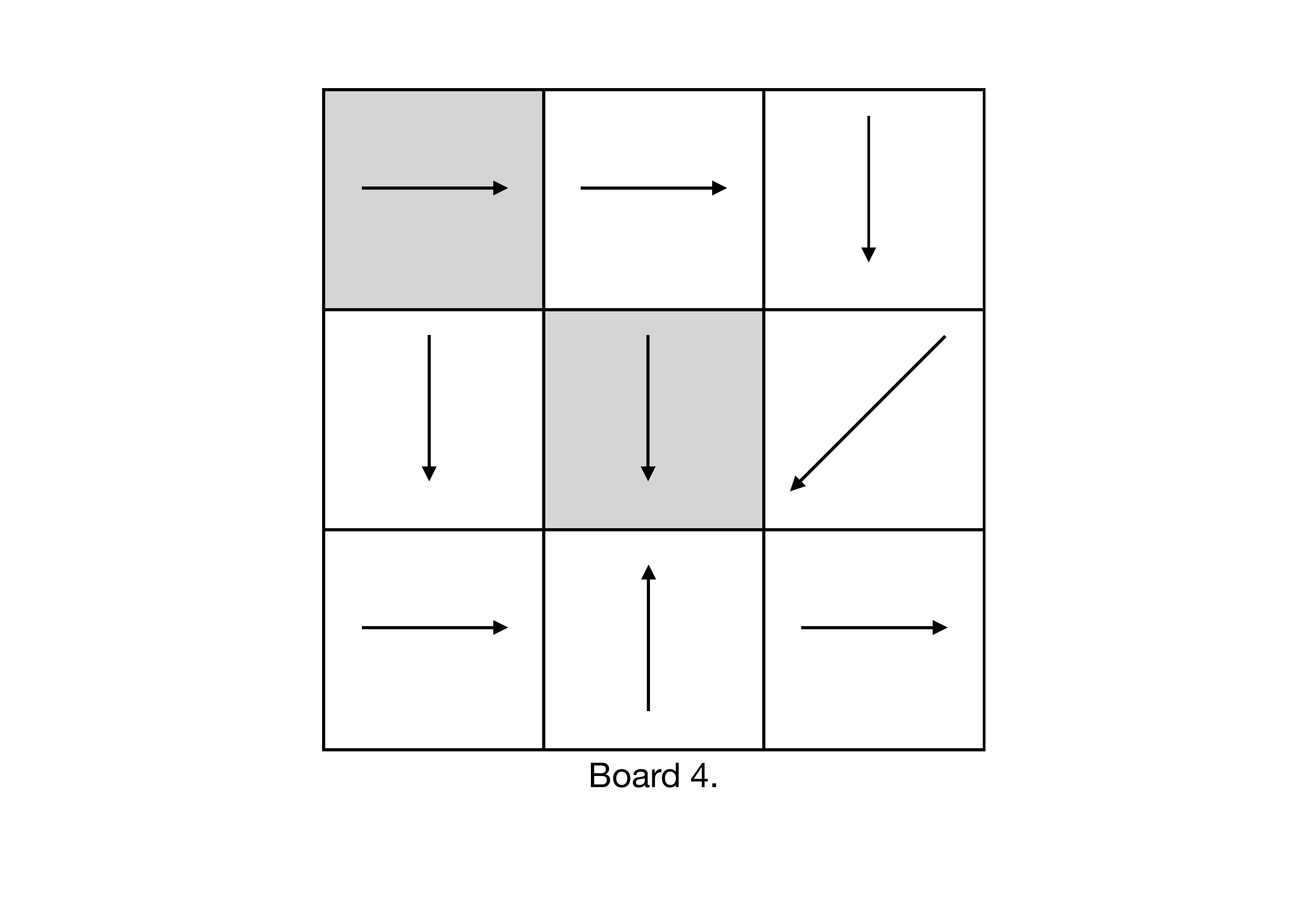}\end{center}
\begin{center}\includegraphics[width=14cm]{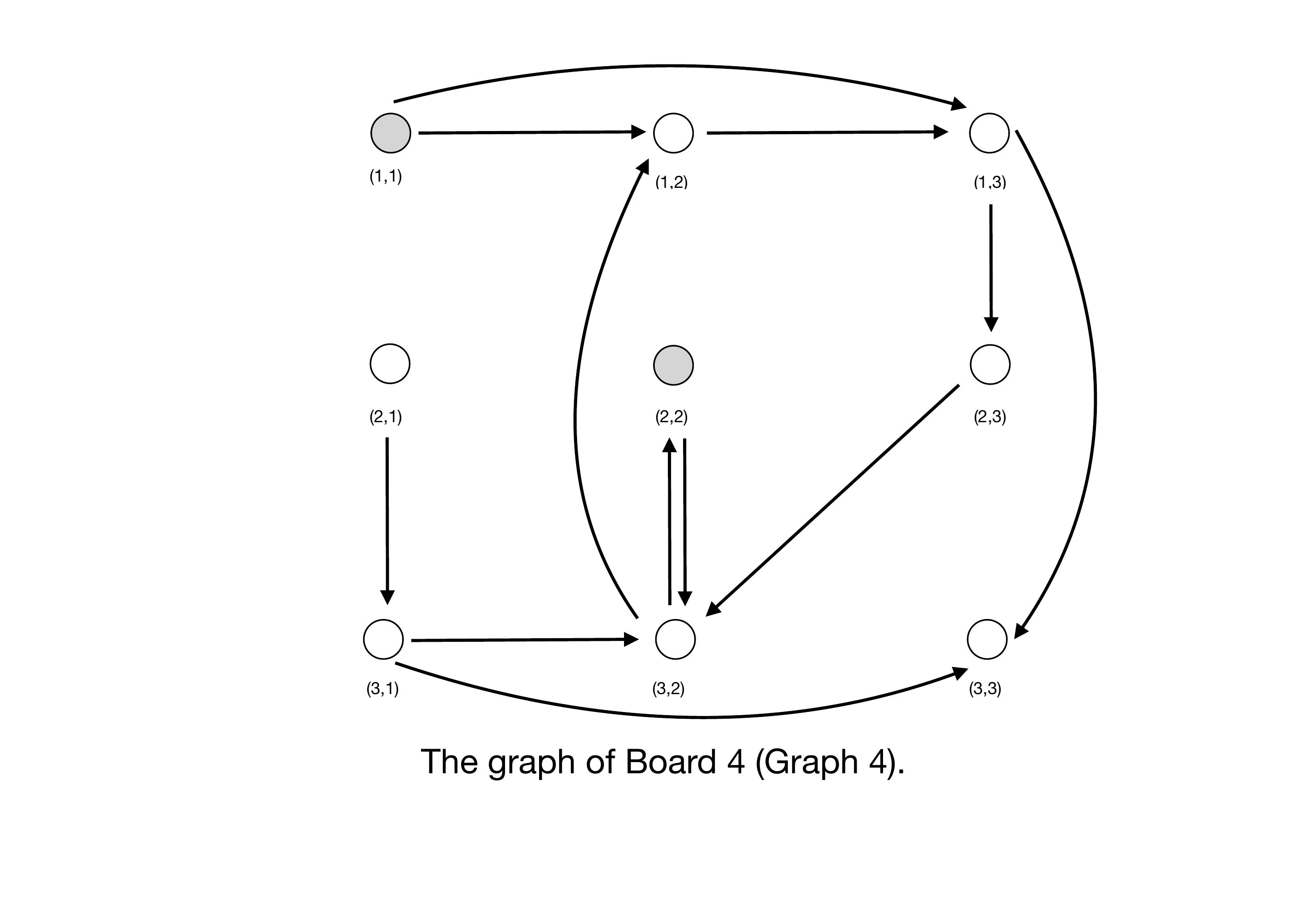}\end{center}
\begin{center}\includegraphics[width=14cm]{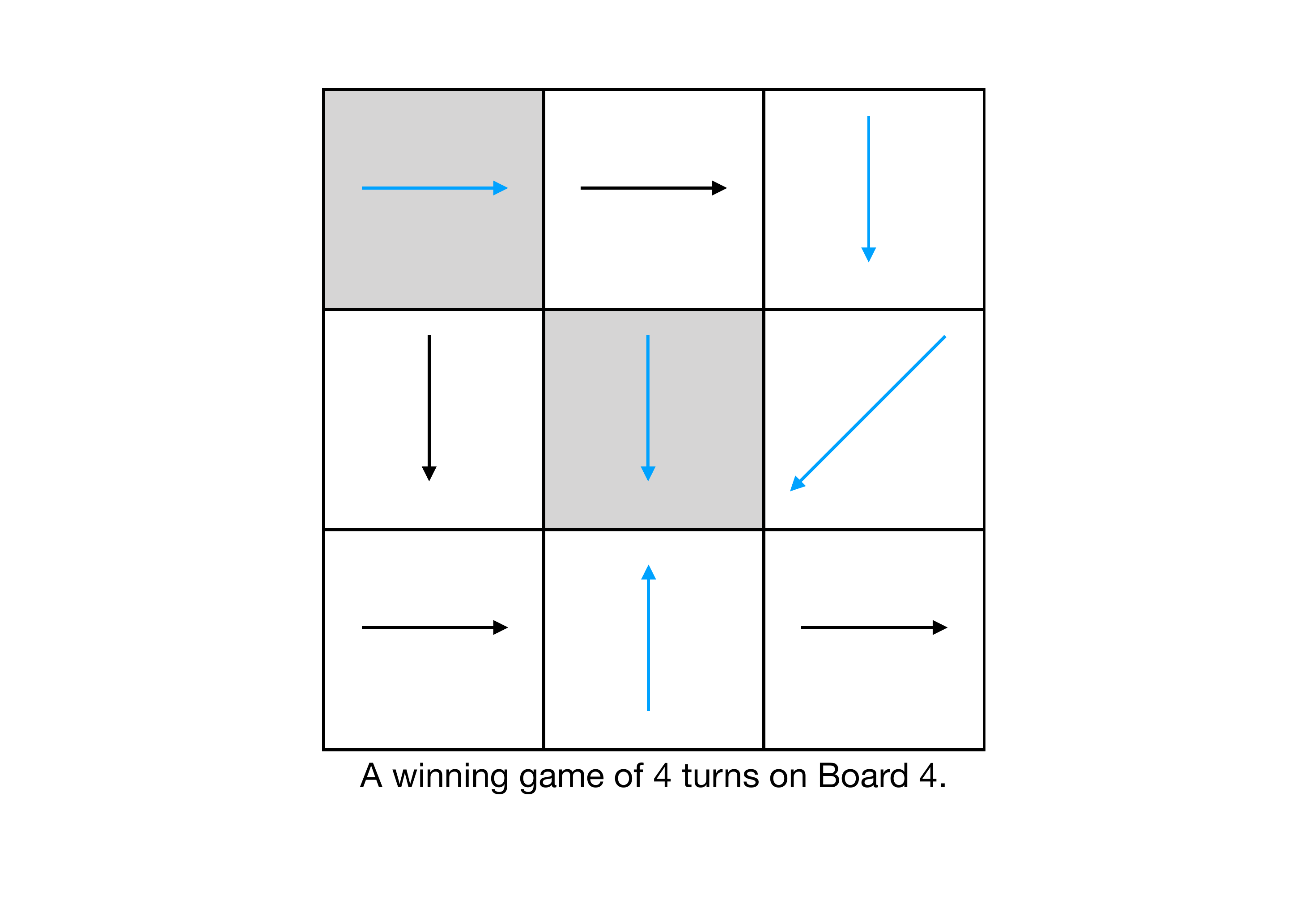}\end{center}
\begin{center}\includegraphics[width=14cm]{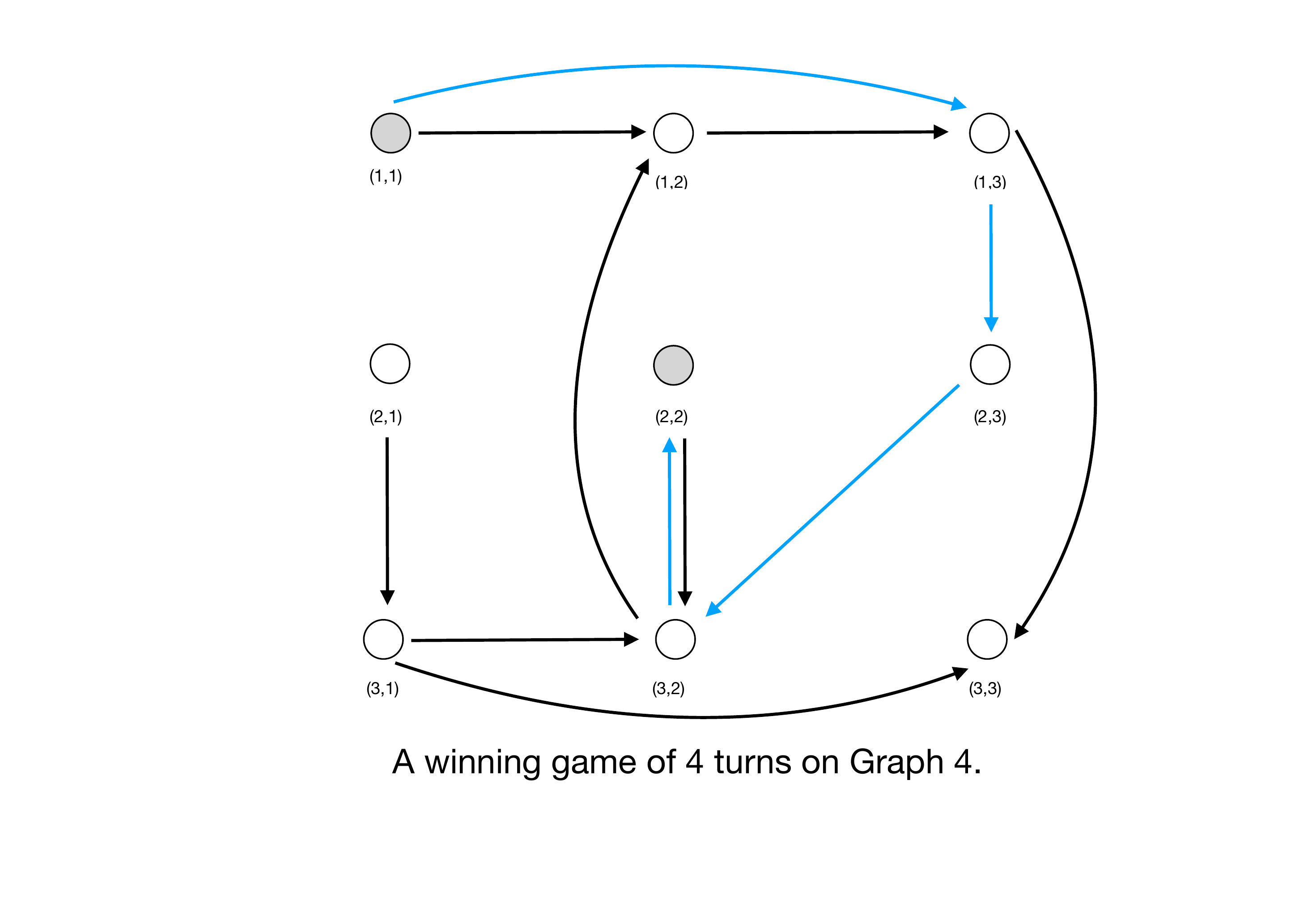}\end{center}
\begin{center}\includegraphics[width=14cm]{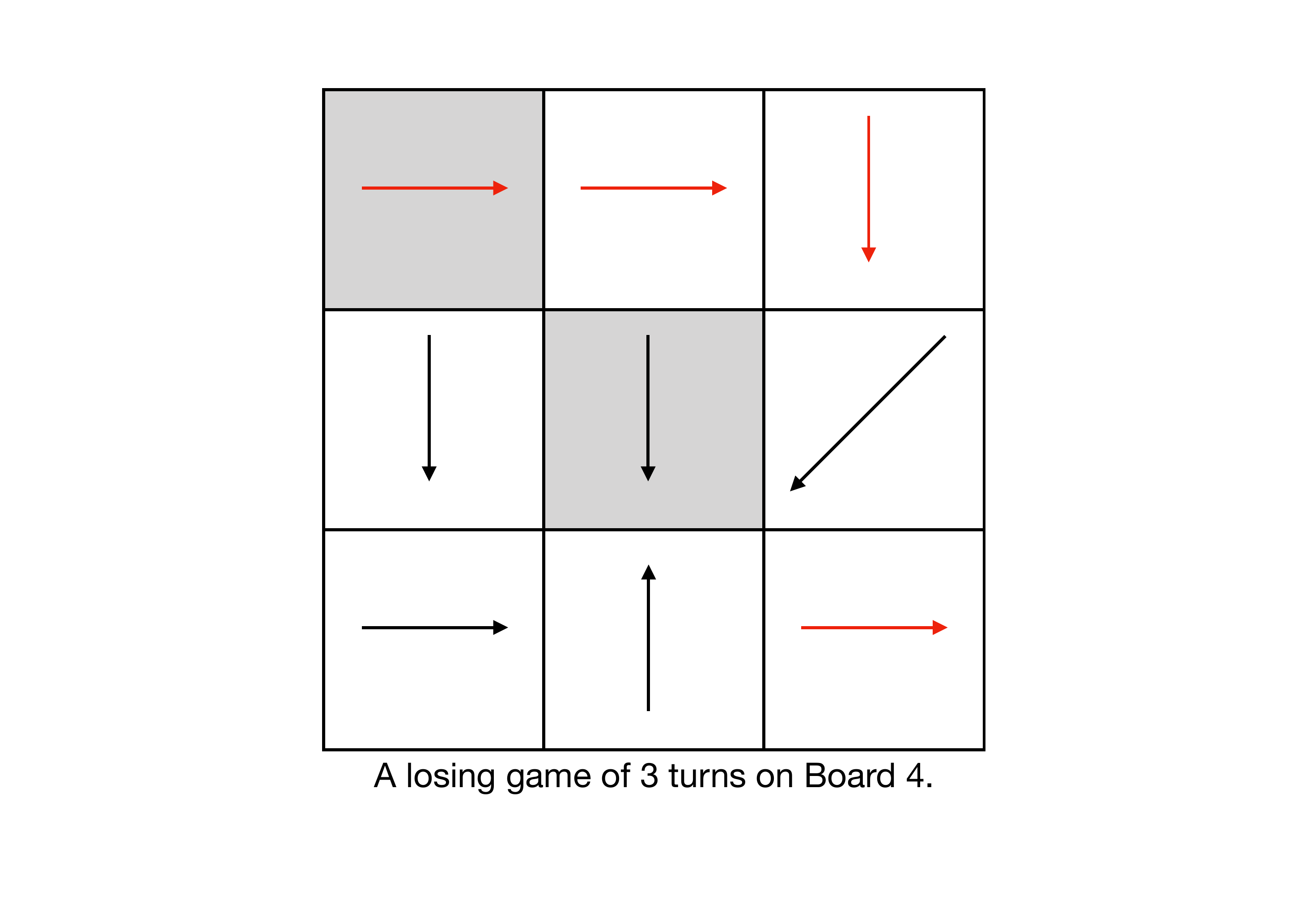}\end{center}
\begin{center}\includegraphics[width=14cm]{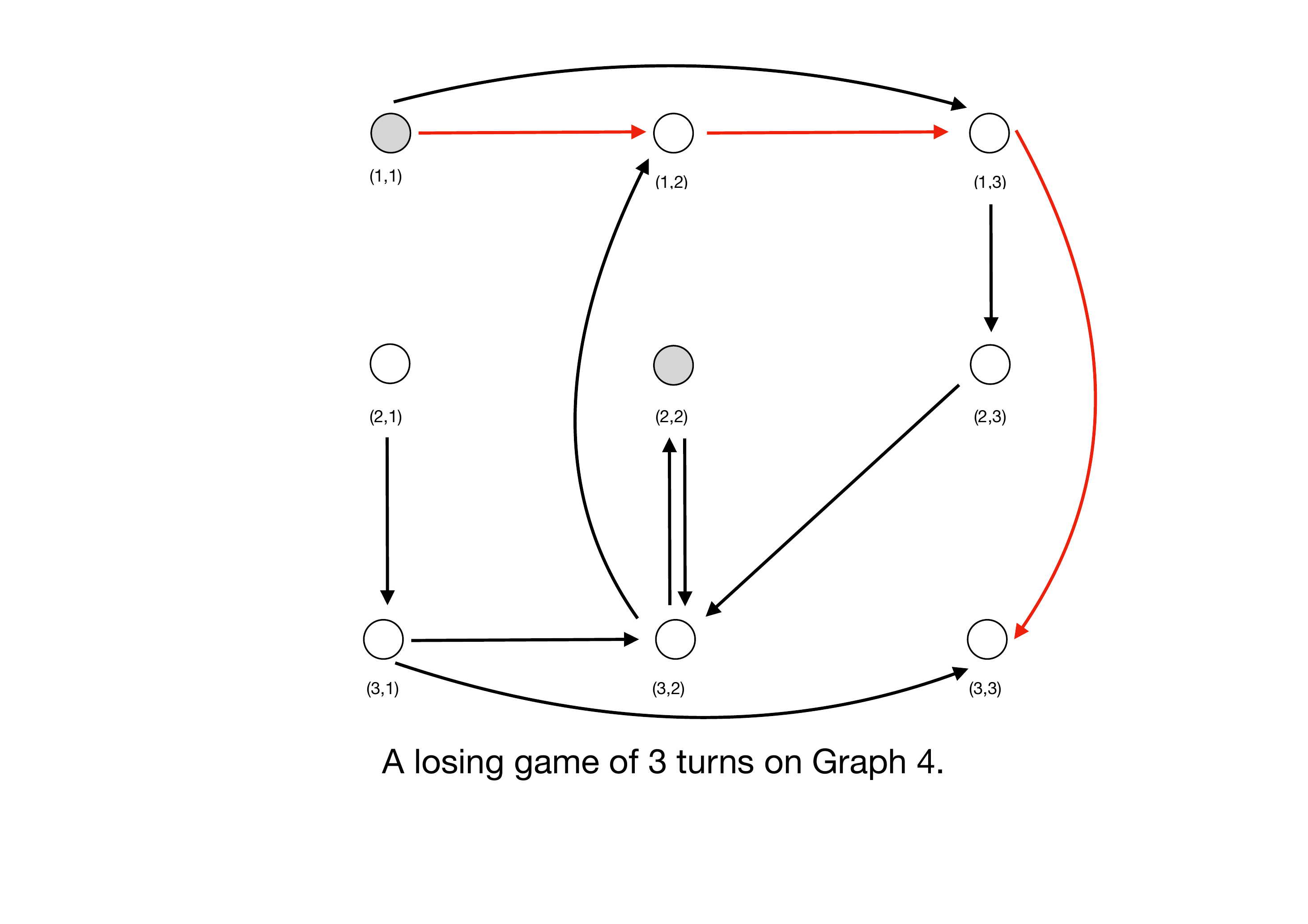}\end{center}
\end{example}


\begin{proposition}\label{prop-total-number-of-edges}Let $A\in\text{Mat}_{n\times n}(\mathcal{D})$ be a board, and let $\Gamma=G(A)$ be its graph. Denote the $(1,1)$ vertex by $v_{\text{beginning}}$ and the $(\frac{n+1}{2},\frac{n+1}{2})$ vertex by $v_{\text{end}}$. Finally, for a vertex $v\in\Gamma$ denote by $Out(v)$ the number of edges in $\Gamma$ pointing from $v$ (also sometimes denoted $\text{deg}^+(v)$). Then:
\begin{enumerate}
\item For any vertex $v\in\Gamma$, $0\leq Out(v)\leq n-1$. Moreover, $Out(v_{beginning})\in\{0,n-1\}$ and $Out(v_{end})=\frac{n-1}{2}$.
\item $\sum\limits_{v\in\Gamma}Out(v)$, i.e., the total number of edges in $\Gamma$, satisfies $$\frac{n^3}{6}-\frac{n^2}{2}+\frac{5n}{6}-\frac{1}{2}\leq\sum\limits_{v\in\Gamma}Out(v)\leq\ \frac{5n^3}{6}-\frac{n^2}{2}+\frac{n}{6}-\frac{1}{2},$$ and these bounds are sharp, i.e., there exist boards $A',A''\in\text{Mat}_{n\times n}(\mathcal{D})$ with $$\sum\limits_{v\in G(A')}Out(v)=\frac{5n^3}{6}-\frac{n^2}{2}+\frac{n}{6}-\frac{1}{2},$$ $$\sum\limits_{v\in G(A'')}Out(v)=\frac{n^3}{6}-\frac{n^2}{2}+\frac{5n}{6}-\frac{1}{2}.$$
\end{enumerate}
\end{proposition}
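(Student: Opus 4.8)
## Proof Proposal

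The plan is to handle the two parts separately, treating part (1) as a pointwise analysis of a single vertex and part (2) as a summation/counting argument, with explicit extremal constructions for sharpness.

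\textbf{Part (1).} For a vertex $v = (i,j)$, the number of positions to which $a_{ij}$ can direct depends only on $a_{ij}$ and on the location of $(i,j)$ relative to the board boundary. For a fixed direction $d \in \mathcal{D}$, the set of positions that an entry at $(i,j)$ directs to is the intersection of a half-line (or diagonal ray) emanating from $(i,j)$ with the $n\times n$ grid, minus the point $(i,j)$ itself; this intersection has at most $n-1$ elements, with equality achieved exactly when the ray runs along a full row or column from an edge cell. So $0 \le Out(v) \le n-1$ for every $v$, and $Out(v) = 0$ precisely when $a_{ij}$ points off the board (e.g.\ $\uparrow$ at the top row). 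For $v_{\text{beginning}} = (1,1)$: the only directions that keep us on the board are $\rightarrow$, $\downarrow$, $\searrow$, giving $n-1$, $n-1$, $n-1$ positions respectively (the main diagonal from $(1,1)$ has $n$ cells, so $n-1$ others), while all five remaining directions point off the board, giving $0$. Hence $Out(v_{\text{beginning}}) \in \{0, n-1\}$. For $v_{\text{end}} = (\frac{n+1}{2},\frac{n+1}{2})$, the center cell, each of the $8$ directions reaches exactly $\frac{n-1}{2}$ cells: e.g.\ $\rightarrow$ reaches columns $\frac{n+1}{2}+1,\dots,n$, which is $\frac{n-1}{2}$ cells, and the diagonals reach $\frac{n-1}{2}$ cells as well by the symmetric position of the center. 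So $Out(v_{\text{end}}) = \frac{n-1}{2}$ regardless of $a_{\frac{n+1}{2},\frac{n+1}{2}}$.

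\textbf{Part (2).} The idea is to compute, for each cell $(i,j)$, the minimum and maximum possible value of $Out((i,j))$ over the $8$ choices of $a_{ij}$, and then sum these over all cells. For a cell $(i,j)$, write $r(i,j)$ for the number of cells strictly to the right in its row, $\ell(i,j)$ to the left, $u(i,j)$ above, $d(i,j)$ below, and similarly $ne, nw, se, sw$ for the four diagonal rays. Then $Out((i,j))$ ranges over the $8$ values $\{u,ne,r,se,d,sw,\ell,nw\}$ according to $a_{ij}$. The minimum over directions is $0$ for every cell that touches the boundary (any such cell has some direction pointing off the board), and for interior cells the minimum is the smallest of the eight ray-lengths. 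The maximum is the largest of the eight. To get the global bounds we sum the per-cell minima to get the lower bound $\frac{n^3}{6} - \frac{n^2}{2} + \frac{5n}{6} - \frac{1}{2}$ and the per-cell maxima to get the upper bound $\frac{5n^3}{6} - \frac{n^2}{2} + \frac{n}{6} - \frac{1}{2}$; since each cell's direction is chosen independently, the board $A'$ realizing all per-cell maxima simultaneously (and $A''$ realizing all per-cell minima) exists, giving sharpness immediately. The bulk of the work is the arithmetic: for the maximum, the longest ray out of $(i,j)$ has length $\max\{i-1, n-i, j-1, n-j\}$ among the axis rays (diagonals are never strictly longer than the longest axis ray), and one sums $\sum_{i,j} \max\{i-1,n-i,j-1,n-j\}$ over the grid, which is a finite polynomial-in-$n$ computation. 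For the minimum, interior cells contribute $\min$ of the eight ray-lengths (the diagonal toward the nearest corner being the binding one), and boundary cells contribute $0$; again one sums a polynomial expression. One should double-check the exact polynomial by evaluating both sides at a small case such as $n=3$ (where the bounds should read $4 \le \sum Out(v) \le 12$) to confirm the constants $\frac{5n}{6} - \frac{1}{2}$ and $\frac{n}{6} - \frac{1}{2}$.

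\textbf{Main obstacle.} The conceptual content is light; the real difficulty is bookkeeping in the summation for part (2) — correctly splitting the grid into regions according to which of the eight rays is shortest (resp.\ longest), handling the boundary cells and the center cell separately, and not dropping lower-order terms. A clean way to organize the max computation is to note $\max\{i-1,n-i,j-1,n-j\}$ depends only on $\max\{|i - \frac{n+1}{2}|, |j - \frac{n+1}{2}|\}$ shifted appropriately, i.e.\ on the $\ell^\infty$ distance of $(i,j)$ from the center, so the sum decomposes into concentric square "shells"; summing over shells of side $2t+1$ for $t = 0, 1, \dots, \frac{n-1}{2}$ turns the whole thing into a single-variable sum $\sum_t (\text{shell size}) \cdot (\text{ray length on that shell})$, which telescopes to the claimed cubic. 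The analogous shell decomposition, together with the observation that all boundary shells contribute $0$ to the minimum, handles the lower bound. I expect the final verification against $n = 3$ to catch any off-by-one errors in the shell indexing.
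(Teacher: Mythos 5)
Your overall strategy --- per-cell minima and maxima of the out-degree, organized into concentric shells by the $\ell^\infty$-distance $d(v)=\max\{|i-\frac{n+1}{2}|,|j-\frac{n+1}{2}|\}$ with shell sizes $1$ and $8k$ --- is exactly the paper's, and part (1) together with your structural observations (the longest ray is an axis ray of length $\frac{n-1}{2}+d(v)$, the shortest ray has length $\frac{n-1}{2}-d(v)$, and since the arrows are chosen independently per cell both extremes are realized simultaneously by some board) is essentially fine, up to the small imprecision that a corner's diagonal also attains $n-1$. The genuine gap is that you defer the one step that carries all the content, the shell-sum arithmetic, and what you assert about its outcome is wrong: at $n=3$ the displayed bounds evaluate to $2$ and $18$, not to the $4$ and $12$ you predict, while the sums of per-cell minima and maxima evaluate to $1$ and $17$. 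The mismatch comes from the center cell, whose out-degree is $\frac{n-1}{2}$ for every choice of arrow, so the $d=0$ shell contributes $\frac{n-1}{2}$; carried out correctly, your method yields the sharp values $\frac{n-1}{2}+\frac{(n^2-1)(n-3)}{6}=\frac{n(n-1)(n-2)}{6}$ and $\frac{n-1}{2}+\frac{(n^2-1)(5n-3)}{6}=\frac{n(n-1)(5n+2)}{6}$, each exactly $\frac{n-1}{2}$ below the polynomials in the statement.

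Consequently your plan, executed honestly, does not establish the statement as written: the stated upper bound is true but not attained by any board, and the stated lower bound is false --- your own extremal board $A''$ (all boundary arrows pointing off the board, each interior arrow pointing toward the nearest edge) has total out-degree $\frac{n(n-1)(n-2)}{6}$, e.g.\ $1$ for $n=3$, which is below the claimed $\frac{n^3}{6}-\frac{n^2}{2}+\frac{5n}{6}-\frac{1}{2}=2$. (The paper's own computation contains the same slip, writing $n-1$ instead of $\frac{n-1}{2}$ for the $k=0$ term in both sums, which is how those constants arose; its sharpness construction ``$Out(v)=\frac{n+1}{2}+d(v)$'' is likewise impossible at the center.) So you need to actually carry the shell sum through and correct the constants --- or at least notice that the $n=3$ check you yourself propose refutes, rather than confirms, the displayed formulas; as written the proposal asserts an identity that fails.
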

\begin{proof}
(1) is obvious. To prove (2), for $v:=(i,j)\in\{1,\dots,n\}^2$ we define "the distance of $(i,j)$ to the center" by $d(v):=\max\{\abs{i-\frac{n+1}{2}},\abs{j-\frac{n+1}{2}}\}$ (see illustration
below). Then, for any vertex $v\in\Gamma$ we have $$0\leq d(v)\leq\frac{n-1}{2},$$and $$Out((i,j))\in\{\frac{n-1}{2}+d((i,j)),\frac{n-1}{2}-d((i,j))\}.$$ Moreover, there exists exactly 1 vertex $v\in\Gamma$ with $d(v)=0$, exactly 8 vertices $v\in\Gamma$ with $d(v)=1$ and in general for $1\leq k\leq\frac{n-1}{2}$ there exist exactly $4(2k+1)-4=8k$ vertices $v\in\Gamma$ with with $d(v)=k$.
\begin{center}\includegraphics[width=15cm]{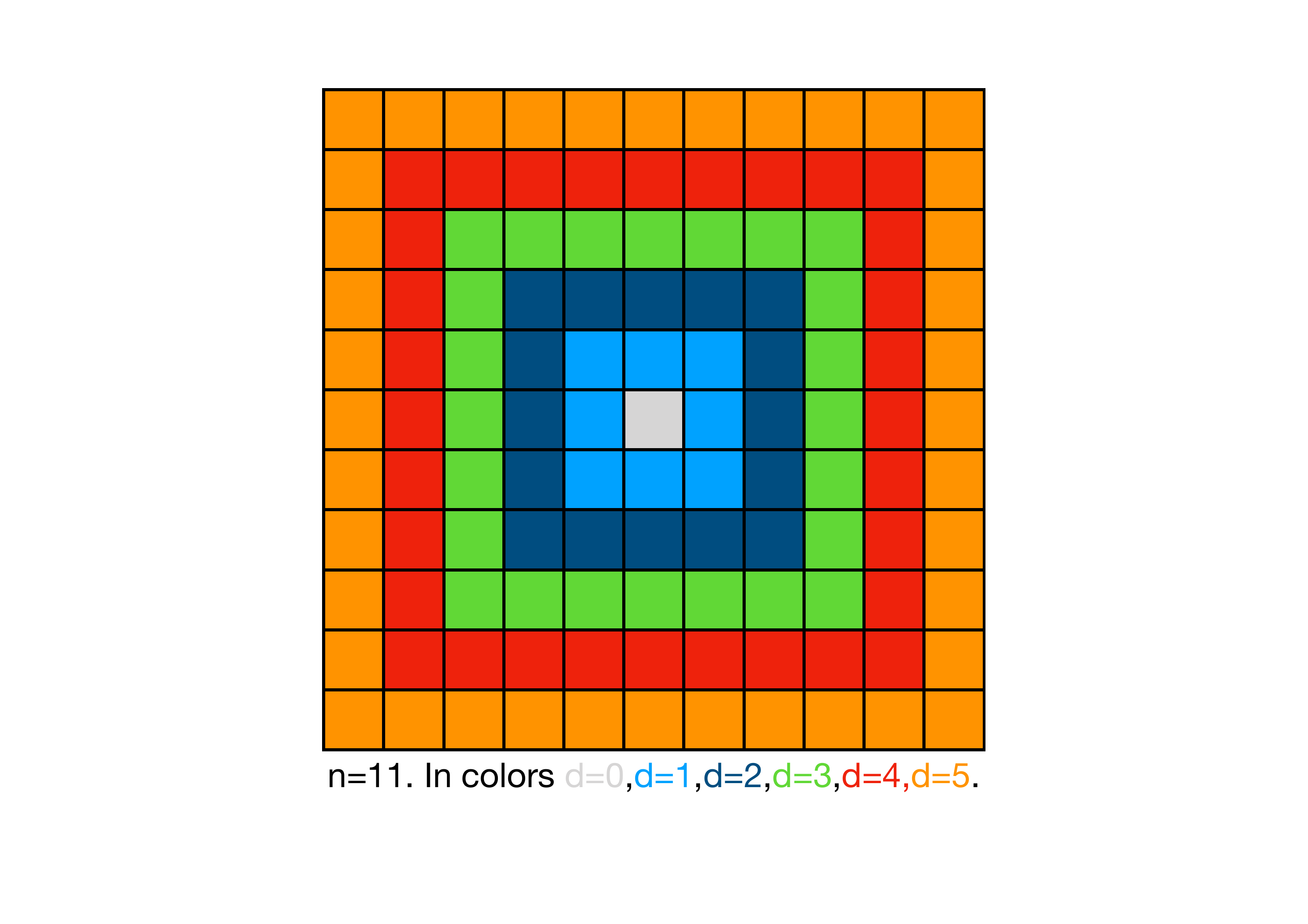}\end{center}
Now we can explicitly calculate $\sum\limits_{v\in\Gamma}Out(v)$ by summing over $d(v)$. To obtain an upper bound we use $Out(v)\leq\frac{n-1}{2}+d(v)$, and to obtain a lower bound we use $Out(v)\geq\frac{n-1}{2}-d(v)$.
 Let us start with an upper bound:
$$\sum\limits_{v\in\Gamma}Out(v)=\sum\limits_{k=0}^{\frac{n-1}{2}}\sum\limits_{v\in\Gamma,d(v)=k}Out(v)$$ $$\leq\sum\limits_{k=0}^{\frac{n-1}{2}}\sum\limits_{v\in\Gamma,d(v)=k}(\frac{n-1}{2}+k)$$ $$=(n-1)+\sum\limits_{k=1}^{\frac{n-1}{2}}\sum\limits_{v\in\Gamma,d(v)=k}(\frac{n-1}{2}+k)$$ $$=(n-1)+\sum\limits_{k=1}^{\frac{n-1}{2}}8k\cdot(\frac{n-1}{2}+k)$$ $$=(n-1)+\frac{4}{3}\cdot \frac{n-1}{2}\cdot \frac{n+1}{2}\cdot \frac{5n-3}{2}$$ $$=(n-1)+\frac{(n-1)(n+1)(5n-3)}{6}$$ $$=\frac{(n^2-1)(5n-3)}{6}+n-1=\frac{5n^3}{6}-\frac{n^2}{2}+\frac{n}{6}-\frac{1}{2}.$$
For the lower bound we have a very similar calculation:
$$\sum\limits_{v\in\Gamma}Out(v)=\sum\limits_{k=0}^{\frac{n-1}{2}}\sum\limits_{v\in\Gamma,d(v)=k}Out(v)$$
$$\geq\sum\limits_{k=0}^{\frac{n-1}{2}}\sum\limits_{v\in\Gamma,d(v)=k}(\frac{n-1}{2}-k)$$
$$=(n-1)+\sum\limits_{k=1}^{\frac{n-1}{2}}\sum\limits_{v\in\Gamma,d(v)=k}(\frac{n-1}{2}-k)$$
$$=(n-1)+\sum\limits_{k=1}^{\frac{n-1}{2}}8k\cdot(\frac{n-1}{2}-k)$$ $$=(n-1)+\frac{4}{3}\cdot
\frac{n-3}{2}\cdot\frac{n-1}{2}\cdot \frac{n+1}{2}$$ $$=(n-1)+\frac{(n-1)(n+1)(n-3)}{6}$$
$$=\frac{(n^2-1)(n-3)}{6}+n-1=\frac{n^3}{6}-\frac{n^2}{2}+\frac{5n}{6}-\frac{1}{2}.$$
It is easy to deduce the sharpness of these bounds from this proof: take $A'$ to be any board with $Out(v)=\frac{n+1}{2}+d(v)$ for any $v\in G(A)$, where clearly many such boards exist (and similarly construct $A''$).\end{proof}

\begin{remark}\label{remark_other_properties_of_graphs}One can formulate another necessary condition in the following spirit: retain the notation of Proposition \ref{prop-total-number-of-edges} and in addition for a vertex $v\in\Gamma$ denote by $In(v)$ the number of
edges in $\Gamma$ pointing to $v$ (also sometimes
denoted $\text{deg}^-(v)$). Then, if for a given vertex $v\in\Gamma$ we have that $In(v)$ is "very big", then $Out(v)$ is "quite small". For example, if $In(v)=4(n-1)$, then $d(v)=0$ and $Out(v)=\frac{n-1}{2}$; if $In(v)=4(n-1)-2$, then $0\leq d(v)\leq1$ and $Out(v)\leq \frac{n+1}{2}$, and so on -- this is because only entries in the matrix that are "very close to the center" lie in two "long" diagonals, and so may possibly have "many" arrows pointing to them (see illustrations below).
\begin{center}\includegraphics[width=15cm]{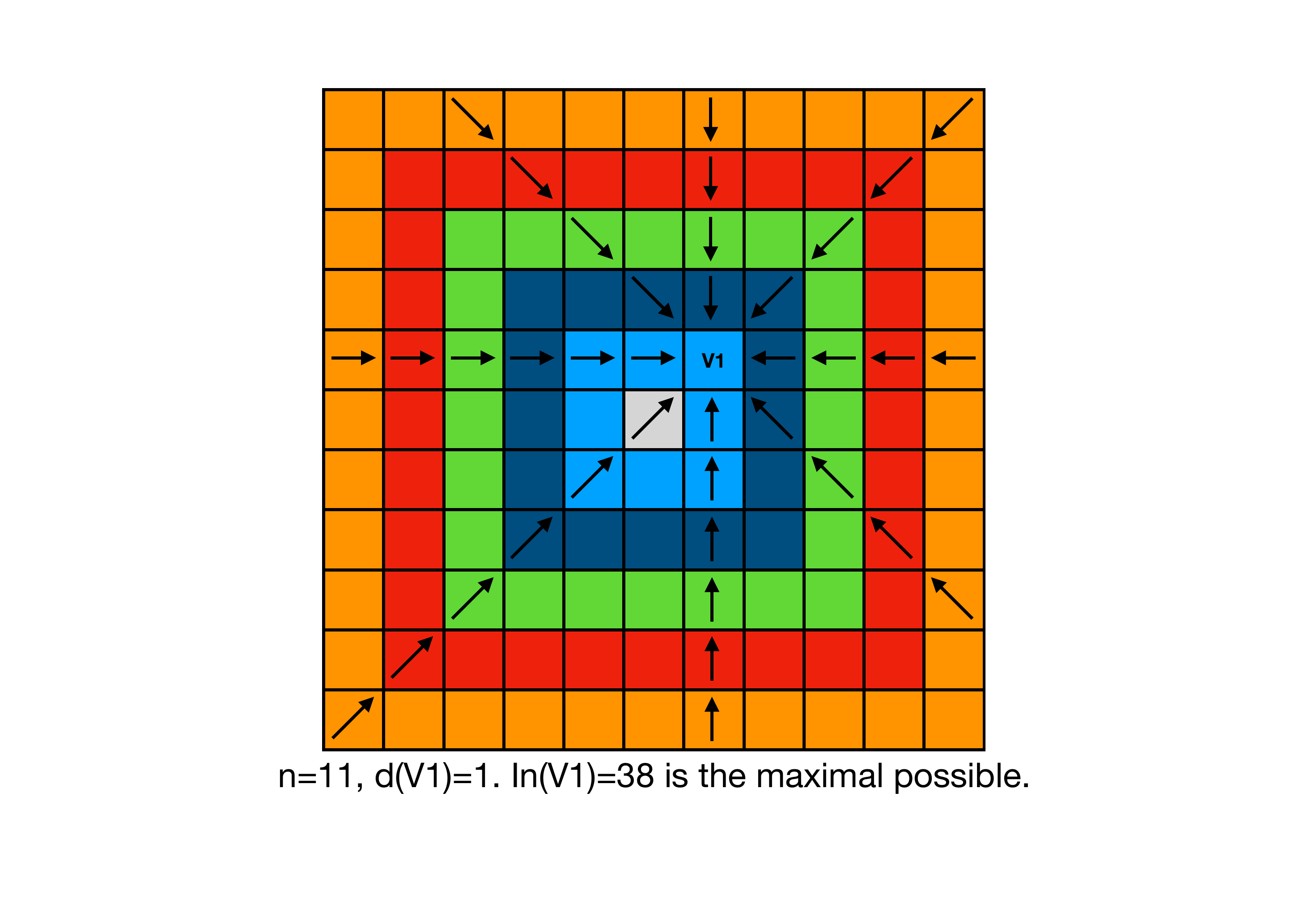}\end{center}
\begin{center}\includegraphics[width=15cm]{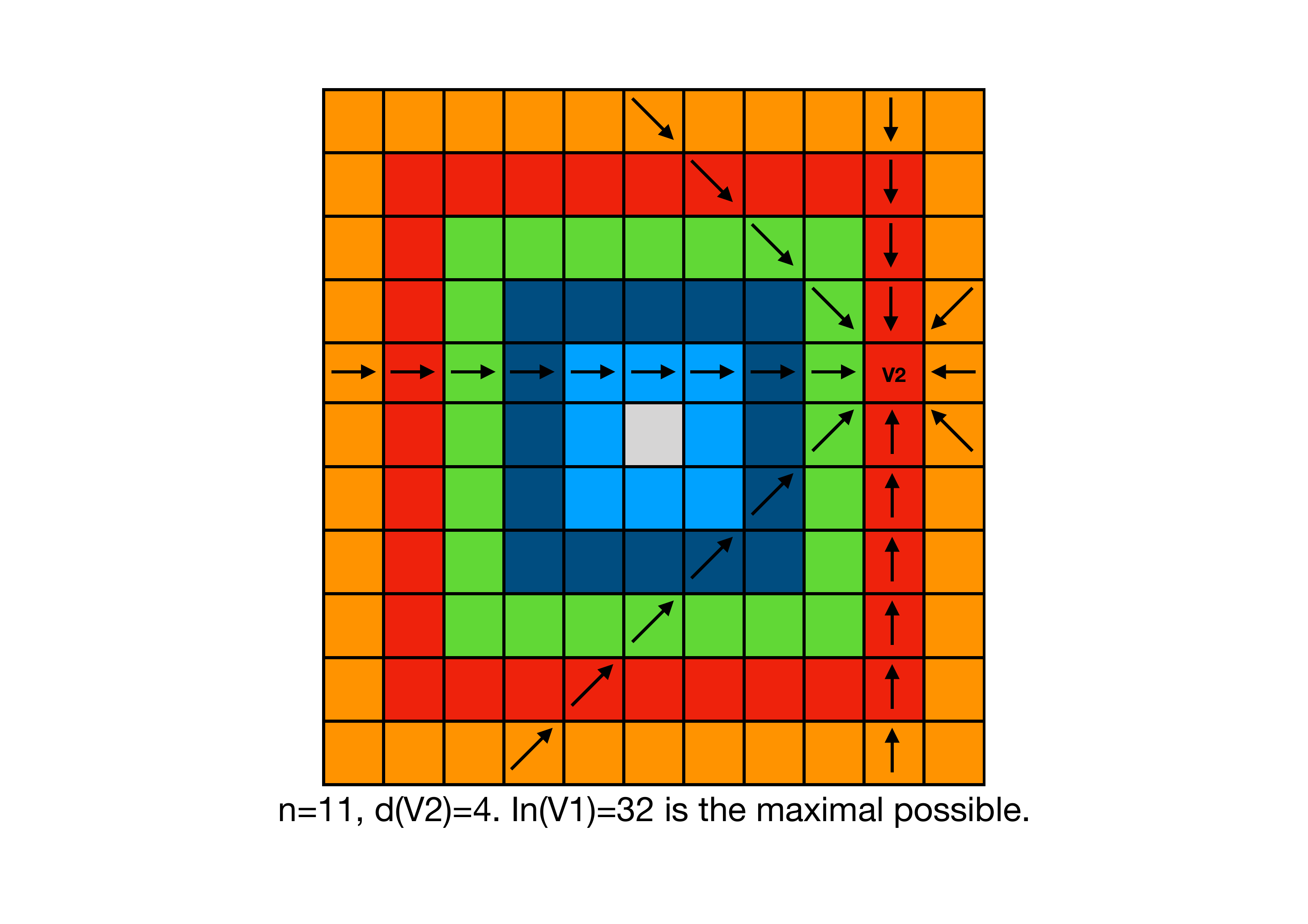}\end{center}
\end{remark}

\subsection{Isomorphic boards and isomorphic graphs}Let $G,G'$ be two double rooted directed graphs, and let $V,V'$ be the sets of vertices of $G,G'$ (respectively). We say that a bijection $\phi:V\to V'$ is an isomorphism of $G$ and $G'$ if $\phi$ maps the beginning root of $G$ to the beginning root of $G'$, the end root of $G$ to the end root of $G'$, and moreover for any $v_1,v_2\in V$ there exists an edge in $G$ pointing from $v_1$ to $v_2$ if and only if there exists an edge in $G'$ pointing
from $\phi(v_1)$ to $\phi(v_2)$. As usual if such a $\phi$ exists we say the $G$ and $G'$ are isomorphic and denote $G\cong G'$.

We say that the boards $A$ and $B$ are isomorphic (and denote $A\cong B$) if $G(A)\cong G(B)$. Note that if $A\cong B$, then $A$ is solvable if and only if $B$ is solvable, and moreover if $A$ is solvable then the length of $A$ equals the length of $B$.

\

Define an involution $(\cdot)^T:\mathcal{D}\to\mathcal{D}$ by $$\uparrow^T=\leftarrow$$ $$\nearrow^T=\swarrow$$ $$\rightarrow^T=\downarrow$$ $$\searrow^T=\searrow$$ $$\downarrow^T=\rightarrow$$ $$\swarrow^T=\nearrow$$ $$\leftarrow^T=\uparrow$$ $$\nwarrow^T=\nwarrow$$
This may be thought of as the involution "reflecting directions through the NW-SE line". 

Define an involution $R:\text{Mat}_{n\times n}(\mathcal{D})\to \text{Mat}_{n\times n}(\mathcal{D})$ by $R(A)_{ij}=(a_{ji})^T$. We think of $R$ as being the map that takes a board and returns the board obtained by putting a mirror on the main diagonal of $A$, or as "a physical reflection through the main diagonal". Note that usually $R(A)\neq A^T$, where $A^T$ is the standard transpose: $(A^T)_{ij}=a_{ji}$. 

For a set $X$ denote by $\text{Per}(X)$ the group of all bijective functions from $X$ to $X$ (permutations). Let $\sigma\in\text{Per}(\{1,\dots,n\}^2)$ and $\tau\in\text{Per}(\mathcal{D})$. We define a permutation $S^\sigma_\tau\in\text{Per}(\text{Mat}_{n\times n}(\mathcal{D}))$ by $S^\sigma_\tau(A)_{ij}:=\tau(a_{\sigma{(ij)}})$. Note that this is indeed a permutation: $(S^\sigma_\tau)^{-1}=S^{\sigma^{-1}}_{\tau^{-1}}$. For instance, if $\sigma'(ij)=ji$ and $\tau'(\cdot)=(\cdot)^T$, then $R=S^{\sigma'}_{\tau'}$. 

Denote by $\text{Sym}(\text{Mat}_{n\times n}(\mathcal{D}))$ the group $$\{S^\sigma_\tau\in\text{Per}(\text{Mat}_{n\times n}(\mathcal{D}))|\sigma\in\text{Pet}(\{1,\dots,n\}^2),\tau\in\text{Per}(\mathcal{D}), \forall A\in\text{Mat}_{n\times n}(\mathcal{D}):S^\sigma_\tau(A)\cong A\}.$$Obviously, $A\cong R(A)=S^{\sigma'}_{\tau'}(A)$ for any $A\in\text{Mat}_{n\times n}(\mathcal{D})$ -- the explicit graph isomorphism sends the vertex $(i,j)\in G(A)$ to the vertex $(j,i)\in G(R(A))$, i.e., $R\in\text{Sym}(\text{Mat}_{n\times n}(\mathcal{D}))$.  
\begin{conjecture}\label{conj_on_symmetries}$\text{Sym}(\text{Mat}_{n\times n}(\mathcal{D}))=\{Id,R\}$. \end{conjecture}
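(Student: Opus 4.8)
The claim is that the only symmetries of the board space (in the sense of the group $\text{Sym}(\text{Mat}_{n\times n}(\mathcal{D}))$) are the identity and the reflection $R$ through the main diagonal. My plan is to analyze what constraints an element $S^\sigma_\tau$ must satisfy in order to preserve the graph $G(A)$ up to isomorphism for \emph{every} board $A$, and then whittle down the possibilities for the pair $(\sigma,\tau)$ to exactly two.

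First I would exploit very sparse boards. Fix a position $(i,j)$ and a direction $d$, and consider the board $A$ whose $(i,j)$ entry is $d$ and all of whose other entries are chosen to ``point off the board'' (e.g.\ each border-adjacent issue handled by pointing a corner arrow outward), so that $G(A)$ has exactly the edges emanating from $(i,j)$ in direction $d$ and essentially nothing else. Comparing $G(A)$ with $G(S^\sigma_\tau(A))$, the out-edges of the vertex $\sigma^{-1}(i,j)$ in $G(S^\sigma_\tau(A))$ are determined by $\tau(d)$ placed at position $\sigma^{-1}(i,j)$; for the two graphs to be isomorphic via \emph{some} root-preserving bijection, one needs the ``shape'' of the edge-fan at $(i,j)$ pointing in direction $d$ to match the shape of the edge-fan at $\sigma^{-1}(i,j)$ pointing in direction $\tau(d)$. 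Since the number of out-edges of a direction-$d$ arrow at $(i,j)$ depends on $(i,j)$ and $d$ in a known way (it is one of $\frac{n-1}{2}\pm d((i,j))$ by the computation in Proposition \ref{prop-total-number-of-edges}, and more precisely the fan reaches to the border of the array), this forces strong compatibility between $\sigma$ and $\tau$. In particular, the beginning root $(1,1)$ and end root $(\frac{n+1}{2},\frac{n+1}{2})$ are fixed by $\sigma$ (they are characterized graph-theoretically: the end root is the unique vertex, and it is the unique one, with the special in/out-degree profile, and the beginning root is singled out by $Out(v_{\text{beginning}})\in\{0,n-1\}$ being forced — this needs a small argument using boards that distinguish $(1,1)$ from other corners).

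Next I would pin down $\sigma$ as an honest symmetry of the square grid fixing both $(1,1)$ and the center: the only grid symmetries fixing a corner are the identity and the reflection swapping $(i,j)\leftrightarrow(j,i)$, and both of these also fix the center. To see that $\sigma$ must be one of these eight-dihedral-group elements at all (a priori $\sigma$ is an arbitrary permutation of the $n^2$ positions), I would use that collinearity along the eight compass lines must be preserved: a length-$d$ arrow at $(i,j)$ creates edges to a specific collinear, equally-spaced set of vertices, and $S^\sigma_\tau$ must send this configuration to an analogous one, so $\sigma$ preserves the incidence structure of rows, columns and diagonals; a standard rigidity argument then forces $\sigma$ to come from the dihedral group of the square, and the corner-fixing condition cuts this down to $\{\text{id},(i,j)\mapsto(j,i)\}$. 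Once $\sigma$ is known, the requirement $S^\sigma_\tau(A)\cong A$ for all $A$ determines $\tau$: if $\sigma=\text{id}$ then $\tau$ must fix every direction's geometric effect, hence $\tau=\text{id}$ and $S^\sigma_\tau=\text{Id}$; if $\sigma$ is the transposition of coordinates, then $\tau$ must be the reflection $(\cdot)^T$ of directions through the NW--SE line, giving $S^\sigma_\tau=R$.

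The main obstacle I anticipate is the very first reduction — showing that an \emph{arbitrary} permutation $\sigma$ of the $n^2$ positions that (together with some $\tau$) preserves all board-graphs up to isomorphism must already be a grid symmetry. The subtlety is that isomorphism of $G(A)$ and $G(S^\sigma_\tau(A))$ is allowed to be witnessed by a \emph{different} relabeling each time, not by $\sigma^{-1}$ itself, so one cannot simply read off that $\sigma^{-1}$ is a graph automorphism. I would handle this by choosing test boards $A$ rigid enough that $G(A)$ has no nontrivial root-preserving automorphisms, so that any isomorphism $G(A)\cong G(S^\sigma_\tau(A))$ is forced to be the ``obvious'' one and hence reveals the local action of $\sigma$ and $\tau$ directly; assembling finitely many such rigid test boards (one isolating each position–direction pair) should suffice to reconstruct $(\sigma,\tau)$ completely, after which the collinearity/rigidity argument applies. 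I would expect the write-up to spend most of its length constructing these rigid test boards and verifying their rigidity, with the dihedral-group and direction-reflection bookkeeping being routine afterward.
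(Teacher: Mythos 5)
The statement you are trying to prove is Conjecture \ref{conj_on_symmetries}: the paper offers no proof of it (it is explicitly left open and described as ``very technical''), so there is nothing to match your argument against; the only question is whether your proposal is itself a proof. It is not --- it is a strategy outline, and its first concrete step already fails. You propose, for each position $(i,j)$ and direction $d$, a test board whose graph has ``exactly the edges emanating from $(i,j)$ in direction $d$ and essentially nothing else.'' No such board exists: an entry can have out-degree $0$ only if it sits on the border and points outward, and by the count underlying Proposition \ref{prop-total-number-of-edges} every non-border entry has out-degree $\frac{n-1}{2}-d(v)\geq 1$ in \emph{every} direction, so any board of size $n\geq5$ has on the order of $n^3$ edges you cannot switch off. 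Thus the whole ``isolate one position--direction fan'' reduction, on which the reconstruction of $(\sigma,\tau)$ rests, has to be replaced by a genuinely different device, and none is supplied.

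The deeper gap is the one you yourself flag and then defer: since $S^\sigma_\tau(A)\cong A$ only asserts the existence of \emph{some} root-preserving isomorphism, which may be a different bijection for each $A$, you cannot read off $\sigma$ or $\tau$ from any single board, and you propose ``rigid'' test boards without constructing a single one or proving its rigidity. Even granting rigidity of each test board, uniqueness of each isomorphism $G(A)\cong G(S^\sigma_\tau(A))$ gives no coherence between the isomorphisms obtained from different test boards, so the claim that finitely many of them ``reconstruct $(\sigma,\tau)$ completely'' is an assertion, not an argument. Likewise, the passage from ``$\sigma$ preserves the incidence structure of rows, columns and the two families of diagonals'' to ``$\sigma$ lies in the dihedral group of the square'' is plausible but is exactly the kind of rigidity lemma that would need a written proof, as would the characterization of the roots you invoke. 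In short, the skeleton (force $\sigma$ to be a grid symmetry fixing $(1,1)$ and the center, conclude $\sigma\in\{\mathrm{id},(i,j)\mapsto(j,i)\}$, then determine $\tau$) is a sensible plan and consistent with why the conjecture should hold, but every load-bearing step is missing or, in the case of the sparse boards, impossible as stated, so the conjecture remains unproved by this proposal.
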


Conjecture \ref{conj_on_symmetries} most likely holds, however showing it is very technical, and moreover does not exhaust all isomorphism classes of graphs, as illustrated by the following "trivial change": the boards $A=\begin{pmatrix}\to & \to & \to \\ \to & \to & \to \\ \to & \to & \to \end{pmatrix}$ and $B=\begin{pmatrix}\to & \to & \uparrow \\ \to
& \to & \to \\ \to & \to & \to \end{pmatrix}$ are isomorphic, however clearly there does not exist $S\in\text{Sym}(\text{Mat}_{3\times 3}(\mathcal{D}))$ such that $S(A)=B$. 
\begin{question}\label{question_on_isomorphic_boards}Let $A,B\in \text{Mat}_{n\times n}(\mathcal{D})$ be such that $A\cong B$. Is it always the case that $B$ can be obtained from $A$ by applying a finite sequence of operations from $\text{Sym}(\text{Mat}_{n\times n}(\mathcal{D}))$ and "trivial changes"?\end{question}

Proposition \ref{prop-total-number-of-edges}, Remark \ref{remark_other_properties_of_graphs}, Conjecture \ref{conj_on_symmetries} and Question \ref{question_on_isomorphic_boards} are all strongly related to the following, much more general, question:

\begin{question}\label{general_question_on_how_many_graphs_are_there}Fixing an odd integer $n\geq3$ there are $8^{n^2}$ different boards of size $n$. (i) How many different boards are there up to isomorphism, i.e., how many different graphs of the form $G(A)$ are there? (ii) Given a double rooted graph with $n^2$ vertices, how can we tell whether this graph is of the form $G(A)$ for some board $A$ of size $n$? \end{question}

\section{Some generalizations and further remarks}\label{section_remarks}

\subsection{Determining the solvability and length of a given board}Given
a board $A$ we would like to know whether it is solvable and if it is what
is its length. As explained in Definition \ref{def_graph_of_board} this is
equivalent to determining whether there exists a path from $(1,1)$ to $(\frac{n+1}{2},\frac{n+1}{2})$
on $G(A)$ and if such a path exits what is the length of a shortest such path. Such an
algorithm is well known and is usually called the Breadth-First Search (BFS)
algorithm (see \cite[13.3,13.4]{BFS}). It is known to be linear in the number of edges
of the graph, which is $O(n^3)$ is our case, by Proposition \ref{prop-total-number-of-edges}(2).

Here is an intuitive explanation: how do
we know that
Board 2 introduced  in Example \ref{first_example} is not solvable? Well,
no position in this board is directing
to the $(3,3)$ position. This suggests the following algorithm to determine
whether a given board $A$ is solvable or not: mark all positions that point
to the $(\frac{n+1}{2},\frac{n+1}{2})$ position,
then mark  all positions that point to already marked positions and so on,
until you mark -- or do not mark -- the $(1,1)$ position. The BFS algorithm
is based on this idea, where there one first mark all points that $(1,1)$
is pointing to, then mark all points that already marked points are pointing
to, and so on.

\subsection{Generalized games on boards}For a given board our games
always start at the $(1,1)$ position and the goal is always to arrive at
the $(\frac{n+1}{2},\frac{n+1}{2})$ position. For a fixed board $A$ one may
ask whether it is possible to arrive from any fixed position to any fixed
position, whether it is possible to arrive from all positions to all positions,
and other similar questions. We may also ask what is the probability of a
random board to have these properties. 

For a fixed board $A$ such questions may
be formulated on the associated graph $G(A)$, for instance: given
 any directed graph one may contract any strongly connected component of it
(a set of vertices that are all reachable from one to another) into a single
vertex, and the resulting graph will be a Directed Acyclic Graph (DAG), i.e.,
a directed graph with no cycles. Then, the question whether it is possible
to arrive from all positions to all positions on the board $A$ is equivalent
to whether the DAG obtained from $G(A)$ is a point.

\subsection{Finite field model}(following is an idea by
Michele Fornea).

\

The wind compass may be realized quite naturally as $\mathbb{F}_9^{\times}\cong((\mathbb{Z}/3\mathbb{Z})[x]/(x^2+1))^{\times}$;
here is the explicit identification:

\begin{center}\includegraphics[width=17cm]{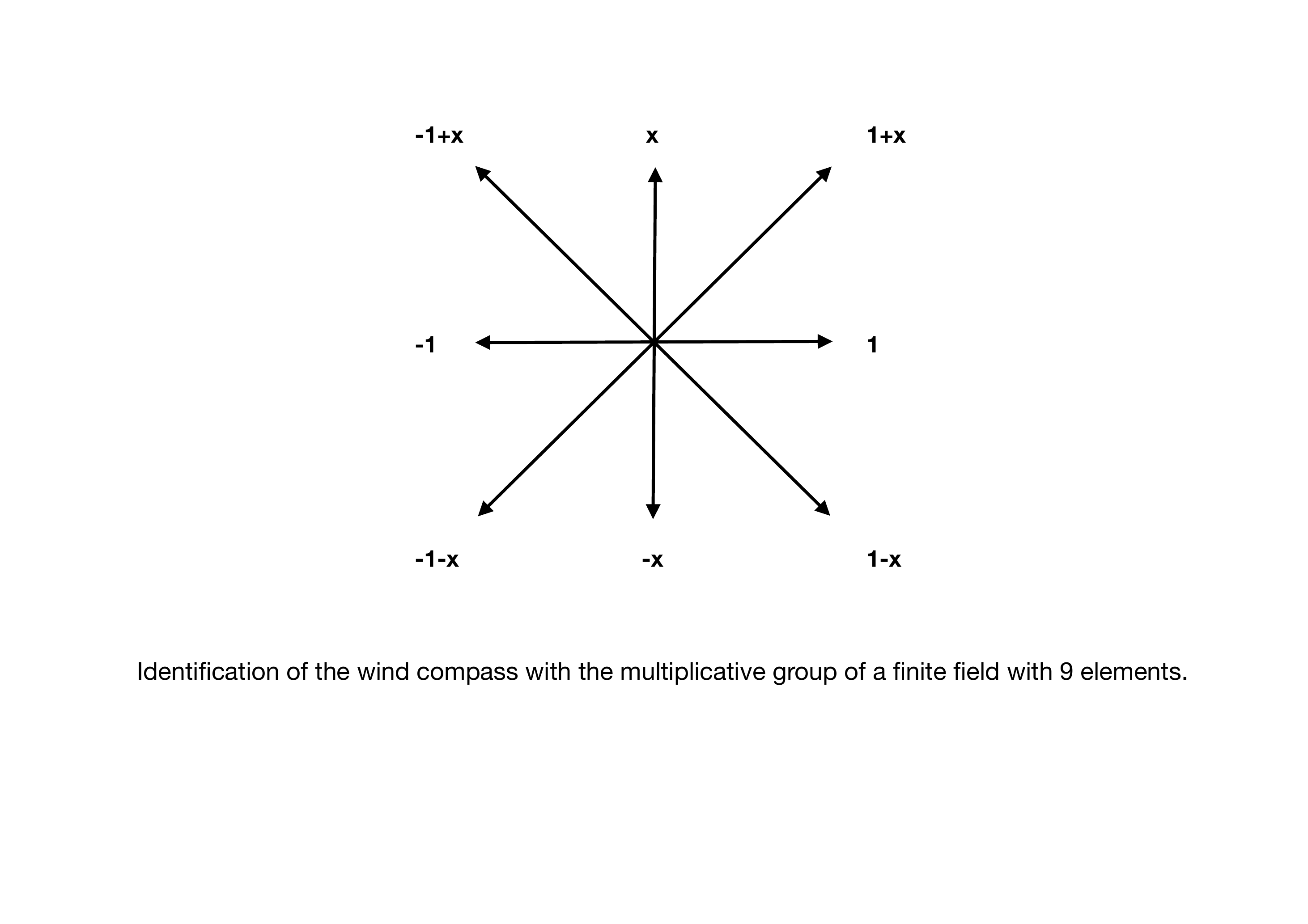}\end{center}

Then, any board may be thought of as an element of $\text{Mat}_{n\times n}(\mathbb{F}_9^{\times})\subset
\text{Mat}_{n\times n}(\mathbb{F}_9)$. If we allow generalized boards in
which some entries may be zero (in addition to the 8 winds) we may define
all the usual matrix operations on boards. One can say, for instance, that
a position that has a zero in it is pointing nowhere, and so once a player
arrived at such a position the game is over and unless it is the $(\frac{n+1}{2},\frac{n+1}{2})$
position it is a loosing game. We can now ask many questions relating matrix
theory to boards, for instance: let $A,B\subset\text{Mat}_{n\times n}(\mathbb{F}_9)$
be two boards of the same size. Can we say anything about the graphs $G(AB)$ or $G(A+B)$ if we know something about the graphs $G(A)$ and $G(B)$?

\subsection{Torus boards} Let $A$ be a board of size $n$. We may place $A$ on a torus in the usual way, which is equivalent to tiling $\mathbb{Z}^2$ with infinitely many copies of $A$ and identifying any position $(i,j)$ with the position $(i+n,j)$ and with the position $(i,j+n)$.

\begin{center}\includegraphics[width=17cm]{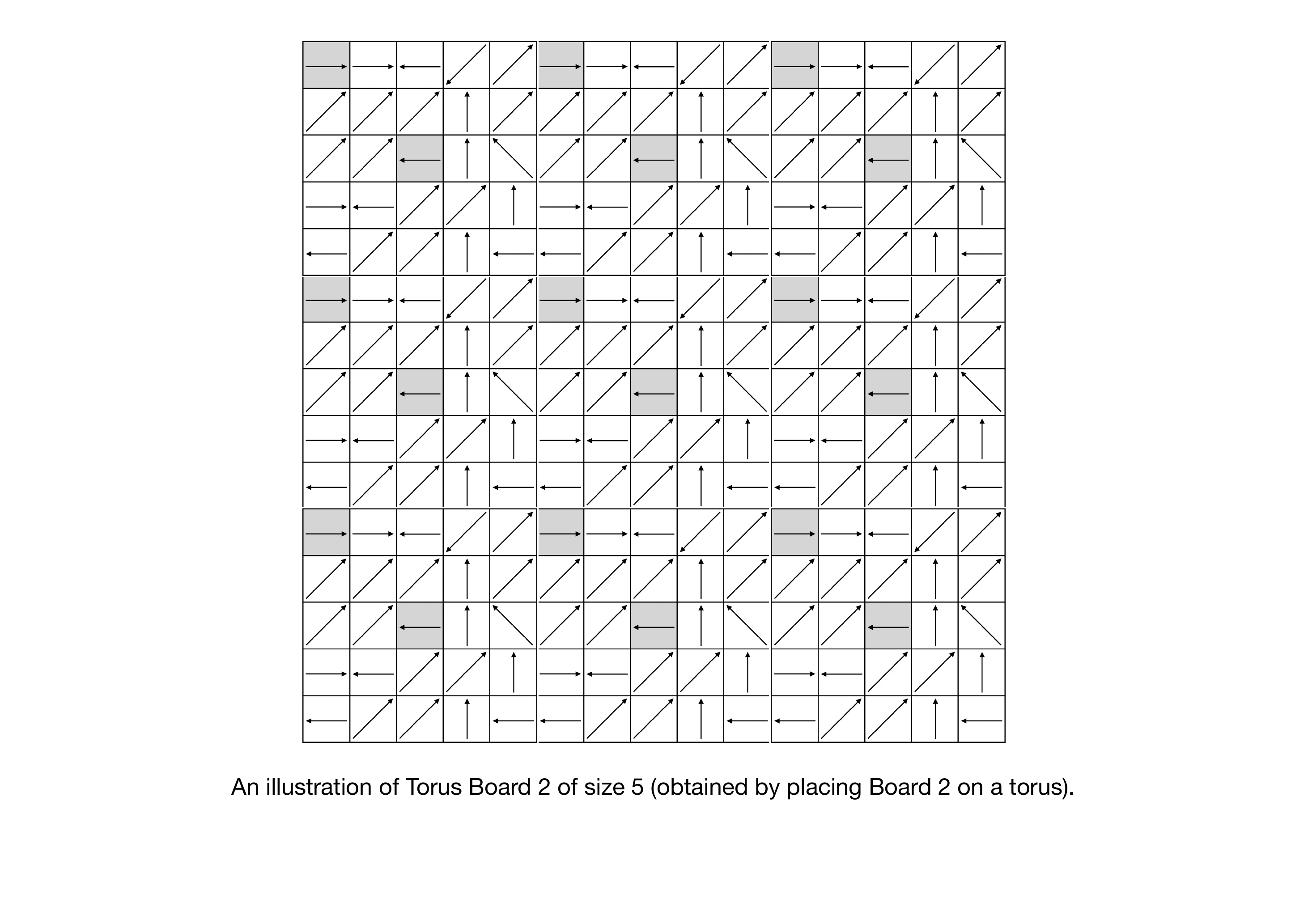}\end{center}

We can now define a torus board of size $n$, in which each arrow is pointing to more entries, e.g., in
Torus Board 2 drawn above the $(1,5)$ position is pointing also to the $(3,3)$ position (we leave it
to the interested reader to write the rigourous definition). We play on this board by again starting from the $(1,1)$ position and our goal is still to arrive at the $(\frac{n+1}{2},\frac{n+1}{2})$ position. Note that though Board 2 is not solvable, Torus Board 2 is solvable. 

We can now ask all the questions we asked regarding usual boards on these torus boards. In particular we may define the length of a solvable torus board, and for torus boards we can prove a strong version of Conjecture \ref{main_conjecture}:

\begin{proposition}\label{proposition_on_ML_on_torus}Fix an odd integer $n\geq3$ and denote by $\overline{ML}(n)$ the maximal length of a solvable torus board of size $n$. Then, $2n-1\leq\overline{ML}(n)\leq4n$ and in particular $\overline{ML}(n)=\Theta(n)$ as $n\to\infty$. \end{proposition}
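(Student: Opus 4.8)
The plan is to establish the two inequalities $2n-1 \le \overline{ML}(n)$ and $\overline{ML}(n) \le 4n$ separately; the second is the content-bearing one.

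\emph{Lower bound.} For $2n-1 \le \overline{ML}(n)$ I would exhibit an explicit solvable torus board whose shortest winning game has length at least $2n-1$. The idea is to force the player to traverse essentially an entire row and then an entire column (or a long diagonal) one step at a time. For instance, fill most of the board with arrows $\rightarrow$ so that starting from $(1,1)$ the player is pushed rightward cell-by-cell along row $1$; arrange that only after wrapping around and reaching a distinguished column is the player redirected (say by a single $\downarrow$ in an appropriate cell) to march down a column one cell at a time until hitting the center. One must check two things: (a) there is \emph{some} winning game — this is easy by construction — and (b) there is no \emph{short} one, i.e. every cell the player can reach from $(1,1)$ forces the walk to advance by exactly one coordinate at a time, so any path to the center has $\ge 2n-1$ edges. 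The wrap-around freedom of the torus is what lets $\rightarrow$ in a cell reach only the next cell (every other cell in that row is "behind" it cyclically is not how torus-directing works, so more care is needed: on a torus a $\rightarrow$ at $(i,j)$ directs to every $(i,j')$ with $j'\neq j$, exactly as on the ordinary board). Hence to genuinely slow the player down one instead uses a "rotor-like" configuration: arrows that each point to a single legal successor because all other target cells are blocked by the game-ending rule or by the geometry of which cells lie on the indicated ray. I would design the board so that from $(1,1)$ the forced trajectory visits $(1,1),(2,1),\dots$ stepping one row at a time via $\downarrow$'s that, because of the specific column pattern, have only one "new" cell reachable before termination. The precise gadget is a routine but slightly fiddly construction; the value $2n-1$ suggests a path that crosses the board once in each of two directions.

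\emph{Upper bound.} For $\overline{ML}(n) \le 4n$ the key observation is that on a torus \emph{every non-terminal arrow reaches far}. Concretely, if $a_{ij} = \rightarrow$ then $(i,j)$ directs to all $n-1$ other cells of row $i$, \emph{including} cells arbitrarily "close" to the center column; similarly for the other axis-aligned directions, and a diagonal arrow directs to all $n-1$ other cells of its wrapped diagonal. In particular: from $(1,1)$, whatever $a_{11}$ is among the four "useful" types ($\rightarrow,\downarrow,\searrow,\nwarrow$ — note that on the torus $\nwarrow$ and $\searrow$ both direct along the same wrapped diagonal, as do the others along their lines), the player can reach in one step any cell on a full line (row, column, or diagonal) through $(1,1)$. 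The strategy is then a short two- or three-legged route: from $(1,1)$, jump along the available line to land in the same row, column, or diagonal as the center; then jump to the center — \emph{provided} the arrow in the intermediate cell cooperates. Since we only control the board adversarially, the bound must account for the worst board: I would argue that starting from $(1,1)$ one can always reach, within a bounded number of steps, a cell from which the center is directly reachable, and the constant is at most $4n$ because in the worst case one may be forced to "walk" along a line a bounded number of cells before finding a cooperating redirect. The honest version: because the board is solvable, there \emph{is} a winning game; I claim a shortest one has length $\le 4n$. Suppose a shortest winning path $(1,1)=v_0,v_1,\dots,v_k$ with $k > 4n$. Then by pigeonhole some row index, column index, or one of the $2n$ wrapped-diagonal classes is "hit" by the path many times; more usefully, among $v_0,\dots,v_k$ with $k>4n$ two of the vertices $v_a, v_b$ ($a<b$) must lie on a common line through which one of them directs — this is where I expect the real work: I need a combinatorial lemma saying that any $4n+2$ cells of the $n\times n$ torus contain two cells $v_a, v_b$ such that $v_a$ directs to $v_b$ on the torus \emph{regardless of the arrow at $v_a$} is false, so instead I use that $v_a$ \emph{does} direct to $v_b$ (since consecutive path edges exist, but $v_a$ and $v_b$ need not be consecutive). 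The correct shortcut argument: the path cannot repeat a vertex (else we could shorten it), so $k+1 \le n^2$, which only gives $k \le n^2-1$ — not good enough. So the genuine argument must use the torus structure: I would show that from any cell $v$, the set of cells reachable in $\le 2$ steps (over \emph{all} choices consistent with solvability — no, the board is fixed) ...

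\emph{The main obstacle.} The crux is the upper bound $\overline{ML}(n)\le 4n$, and specifically proving that a shortest winning game cannot be long. The mechanism must be: on a torus, the graph $G(A)$ has the property that from the start vertex the "reachable set grows by a full line each step", so that after $O(n)$ steps one has either already won or reached a contradiction with solvability. I would formalize this by defining $R_t$ = set of cells reachable from $(1,1)$ in $\le t$ steps, noting $R_{t+1} \supseteq R_t \cup \{\text{all cells on the line indicated by some } v\in R_t\}$, and arguing that if the center is reachable at all, it enters some $R_t$ with $t \le 4n$ — because each new "active" arrow type contributes a whole line ($n$ cells, minus overlaps), and there are only a bounded number of line-directions (horizontal, vertical, two diagonal families) so after crossing each family a bounded number of times the center's row/column/diagonal gets covered. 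Pinning down the constant $4$ (as opposed to some other $O(1)$) is the fiddly part, and I expect it comes from: at most $2$ steps to reach the center's row or column or diagonal in the "generic" case, plus up to $2n$-ish correction steps when the intermediate arrows are uncooperative and one must relay through several cells of a single line — here the torus wrap-around caps the relay length at $\le n$, applied at most a constant number of times, giving $4n$. I would present the lower-bound gadget explicitly (in the spirit of Example \ref{example_game_of_length_3n}) and then prove the upper bound via this "line-covering" reachability argument.
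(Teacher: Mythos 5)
Your upper-bound argument never lands on a complete mechanism, and that is the crux of the proposition. The paper's proof is a short shortcut argument that you circled but did not find: call a "line" any row, column, wrapped NW--SE diagonal, or wrapped SW--NE diagonal of the torus, so there are exactly $4n$ lines. On a torus, if $(i_l,j_l)$ directs to $(i_{l+1},j_{l+1})$, then it directs to \emph{every other cell on that line} (this is the wrap-around fact you correctly noted in the lower-bound discussion but never exploited here). Hence in a shortest winning game, after the move from $(i_l,j_l)$ the path can never again visit any cell of the line used at step $l$: if some later position $(i_{l+m},j_{l+m})$ lay on that line, then $(i_l,j_l)$ would direct to it and the game could be shortened by jumping there directly. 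So each move permanently eliminates one of the $4n$ lines, giving $k\leq 4n$ at once. Your reachability-set sketch ($R_t$ growing by whole lines) is in the right spirit and could be pushed through (each strict growth step fully absorbs a new line, and there are only $4n$ of them), but as written you stop short, and your proposed accounting for the constant ("at most $2$ steps to reach the center's row\dots plus up to $2n$-ish correction steps, applied a constant number of times") is not the right bookkeeping and would not yield $4n$.

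The lower bound is also left unproved: your first construction (all $\rightarrow$'s forcing cell-by-cell motion) fails for exactly the reason you identify, and the "rotor-like gadget" that is supposed to replace it is never specified, so no board with shortest game of length $\geq 2n-1$ is actually exhibited. The paper handles this by taking a spiral-pattern torus board analogous to Board 3 (whose planar version has length $2n-1$) and observing that the spiral still forces a shortest winning game of length $2n-1$ on the torus; some explicit construction of this kind is needed to make your lower bound stand.
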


\begin{proof}First, note that for any $n$ we can generate a torus board using a "spiral pattern" analogues to Board 3. This torus board will have length $2n-1$ and so $2n-1\leq\overline{ML}(n)$.
 
Fix a solvable torus board $A$ of size $n$. There are exactly $4n$ "lines" on $A$, that correspond to its rows, columns, SW-NE diagonals and NW-SE diagonals. 

\begin{center}\includegraphics[width=17cm]{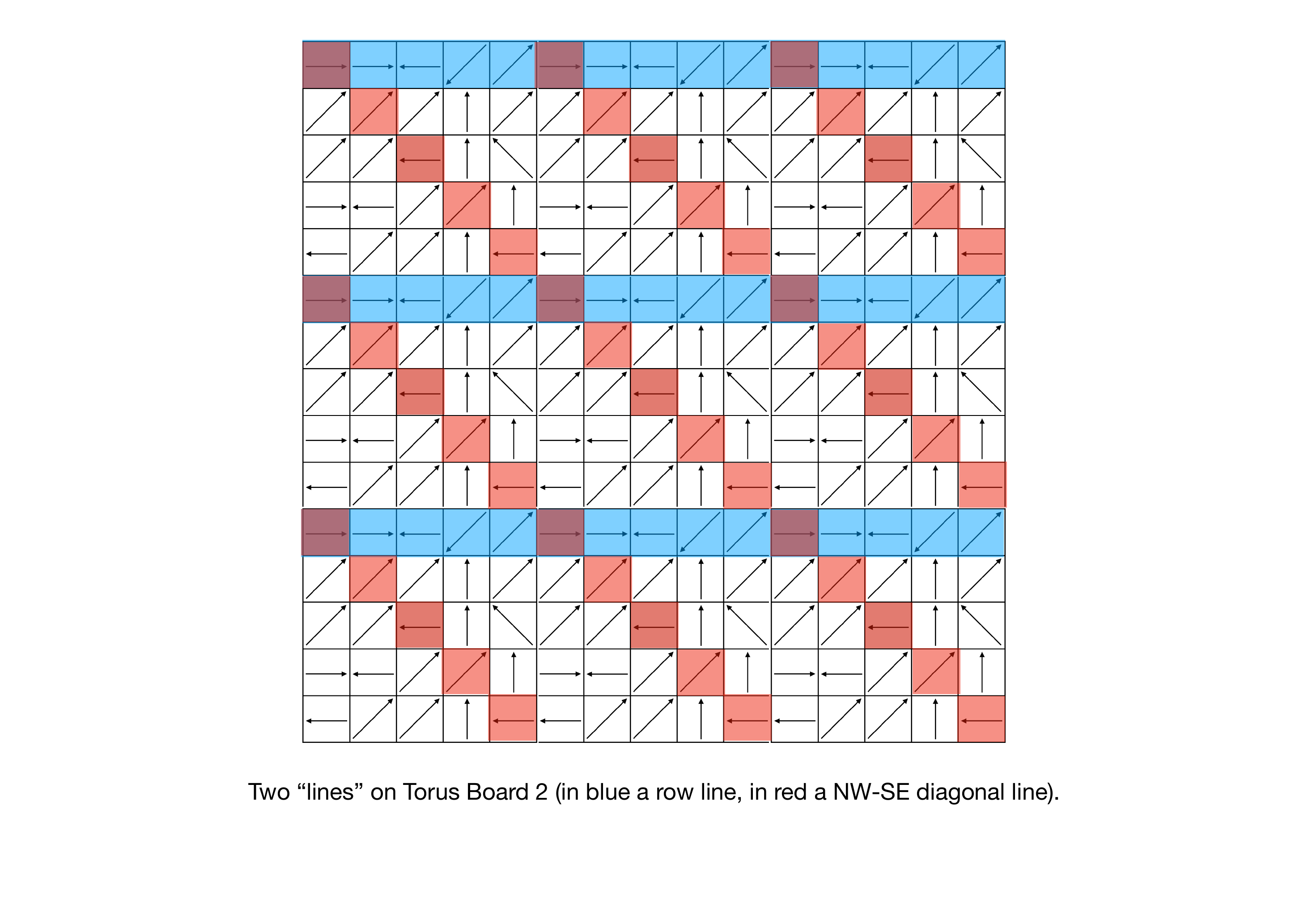}\end{center}

Assume that a specific game $(1,1)=(i_0,j_{0}),(i_1,j_{1}),\dots,(i_k,j_{k})=(\frac{n+1}{2},\frac{n+1}{2})$ is a shortest winning game on $A$. Then, in any transition from $(i_l,j_l)$ to $(i_{l+1},j_{l+1})$ we eliminate one of this "lines", in the sense that we will not visit any position on this "line" in a later time in this game, otherwise we get a contradiction to this game being the shortest possible: indeed, say $(i_{l+l},j_{l+l})$ lies on the same "line" as $(i_l,j_l)$ and $(i_{l+1},j_{l+1})$ for some $l\geq2$. We have that $(i_l,j_l)$ is pointing at $(i_{l+1},j_{l+1})$, but since $(i_{l+l},j_{l+l})$ lies on the same line as $(i_l,j_l)$ and $(i_{l+1},j_{l+1})$ and since this is a torus board, then $(i_l,j_l)$ is also pointing at $(i_{l+l},j_{l+l})$ and we can shorten the game by going directly from $(i_l,j_l)$ to $(i_{l+l},j_{l+l})$.  This implies that $k\leq4n$, i.e., this (arbitrary) board has length at most $4n$, and so we proved that $\overline{ML}(n)\leq4n$. \end{proof}

\subsection{3D boards}One may define a 3-dimensional board in a an analogues way to the way we defined 2-dimensional boards: for an odd $n\geq3$, a 3D board of size $n$ will be a cube in $\mathbb{R}^3$ of side length $n$, while we think about it in the natural way as a collection of $n^3$ unit cubes. Each unit cube contains an arrow, directing to one of the 26 neighbours of this cube (instead of 8 directions of the compass rose here we have 26). The goal is now to arrive from a fixed corner of this 3D board to its center by a path allowed by the arrows. We can now ask all the questions we asked regarding 2D boards on these 3D boards. For instance: what is the probability of a random 3D board to be solvable? What is the expected length of a random 3D solvable board?

The most interesting question is whether the answers to the questions above for 3D boards are \emph{qualitatively} different from the answers we had for 2D boards? One might expect a different behaviour, for instance because of the well known fact (first proven in \cite{polya}) that $\mathbb{Z}^2$ is recurrent whereas $\mathbb{Z}^k$ with $k\geq3$ is transient. For this reason, if indeed qualitatively something different happens in dimension 3, we might expect the same behaviour as in dimension 3 in $k$-dimensional boards when $k\geq4$ (boards in an arbitrary dimension $k$ can easily be defined similarly).

\end{document}